\numberwithin{equation}{section}
\newtheoremstyle{fancy1}{10pt}{10pt}{\itshape}{12pt}{\textsc\bgroup}{.\egroup}{8pt}{
}
\newtheoremstyle{fancy2}{10pt}{10pt}{}{12pt}{\itshape}{.}{8pt}{ }
\theoremstyle{fancy1}
\newtheorem{cor}[equation]{Corollary}
\newtheorem{lem}[equation]{Lemma}
\newtheorem{prop}[equation]{Proposition}
\newtheorem*{thm*}{Theorem}
\newtheorem{main}{Theorem}
\newtheorem*{main*}{Theorem}
\newtheorem*{cor*}{Corollary}
\newtheorem*{prop*}{Proposition}
\newtheorem*{problem*}{Problem}
\theoremstyle{fancy2}
\newtheorem{definition}[equation]{Definition}
\newtheorem*{rems*}{Remarks}
\newtheorem*{rem*}{Remark}
\newtheorem{example}{Example}
\newtheorem*{example*}{Example}
\newcommand{\cref}[1]{Corollary~\ref{#1}}
\newcommand{\dref}[1]{Definition~\ref{#1}}
\newcommand{\lref}[1]{Lemma~\ref{#1}}
\newcommand{\pref}[1]{Proposition~\ref{#1}}
\newcommand{\gt}{\theta}
\newcommand{\e}{\epsilon}
\newcommand{\CP}{\mathbb{C\mkern1mu P}}
\newcommand{\Sph}{\mathbb{S}}
\newcommand{\Disc}{\mathbb{D}}
\newcommand{\C}{{\mathbb{C}}}
\newcommand{\R}{{\mathbb{R}}}
\newcommand{\Z}{{\mathbb{Z}}}
\newcommand{\G}{\ensuremath{\operatorname{G}}}
\newcommand{\SO}{\ensuremath{\operatorname{SO}}}
\renewcommand{\S}{\ensuremath{\operatorname{S}}}
\newcommand{\fg}{{\mathfrak{g}}}
\newcommand{\fk}{{\mathfrak{k}}}
\newcommand{\fh}{{\mathfrak{h}}}
\newcommand{\fm}{{\mathfrak{m}}}
\newcommand{\fp}{{\mathfrak{p}}}
\def\con#1=#2(#3){#1 \equiv #2 \bmod{#3}}
\newcommand{\ml}{\langle}                     
\newcommand{\mr}{\rangle}                    
\newcommand{\tr}{\ensuremath{\operatorname{tr}}}
\renewcommand{\Im}{\ensuremath{\operatorname{Im}}}
\newcommand{\Ad}{\ensuremath{\operatorname{Ad}}}
\renewcommand{\sec}{\ensuremath{\operatorname{sec}}}
\newcommand{\Ric}{\ensuremath{\operatorname{Ric}}}
\DeclareMathOperator{\Id}{Id}
\DeclareMathOperator{\spam}{span}
\newcommand{\Kpo}{K_{\scriptscriptstyle{0}}^{\scriptscriptstyle{+}}}
\newcommand{\Kmo}{K_{\scriptscriptstyle{0}}^{\scriptscriptstyle{-}}}
\newcommand{\Kpm}{K^{\scriptscriptstyle{\pm}}}
\newcommand{\Kp}{K^{\scriptscriptstyle{+}}}
\newcommand{\Km}{K^{\scriptscriptstyle{-}}}
\newcommand{\Ko}{K_{\scriptscriptstyle{0}}}
\newcommand{\subo}{_{\scriptscriptstyle{0}}}
\newcommand{\coo}{{cohomogeneity one}}
\begin{document}

\title{Concavity and rigidity in non-negative curvature}

\dedicatory{ Dedicated to D.V.~Alekseevsky on his 70th birthday }

\author{Luigi Verdiani}
\address{University of Firenze}
\email{verdiani@math.unifi.it}
\author{Wolfgang Ziller}
\address{University of Pennsylvania}
\email{wziller@math.upenn.edu}
\thanks{ The first named author was supported by the University of Pennsylvania
and by IMPA and would like to thank both Institutes for their
hospitality.   The second named author was supported by a grant from
the National Science Foundation, the Max Planck Institute in Bonn,
CAPES and IMPA}

\maketitle

\begin{abstract}\noindent
We show that for a manifold with non-negative curvature one obtains
a collection of concave functions, special cases of which are the
concavity of the length of a Jacobi field in dimension 2, and the
concavity of the volume in general. We use these functions to show
that there are many cohomogeneity one manifolds which do not carry
an analytic invariant metric with non-negative curvature. This
implies in particular, that  one of the candidates in \cite{GWZ}
does not carry an  invariant metric with positive curvature.
\end{abstract}


\bigskip

There are few known examples of manifolds with positive sectional curvature in Riemannian geometry.
Until recently, they were all homogeneous spaces \cite{Be,Wa,AW} and biquotients \cite{Es1,Es2,Ba},
 i.e., quotients  of compact Lie groups $G$ by a free isometric ``two sided" action of a subgroup $H \subset G
\times G$.  See \cite{Z1} for a survey of the known examples.
Recently a new
    example of a positively curved 7-manifold, homeomorphic but not
diffeomorphic to $T_1\Sph^4$, was constructed in \cite{GVZ}, see
also \cite{De} for a different approach. A new method has also been
proposed in  \cite{PW} to construct a metric of positive curvature
on the Gromoll-Meyer exotic 7-sphere. The new example in \cite{GVZ}
is part of a larger family of ``candidates" for positive curvature
discovered in \cite{GWZ}. One of the applications of this paper is
to exclude one of these candidates.

\smallskip

The obstruction that we use to do this turns out to be of a general
nature that does not require the presence of a group action. It
comes from a new concavity property of Jacobi fields in positive
curvature. The method also gives rise to certain rigidity properties
in nonnegative curvature.

\smallskip

Let $c(t)$ be a geodesic in $M^{n+1}$ and $J(t)$ a Jacobi field
along $c$. For a surface it is well known that positive curvature is
equivalent to requiring that the length of all Jacobi fields is
strictly concave. In higher dimensions, the length $|J|$ satisfies
the differential equation
 $$
\frac{|J|''}{|J|}=-\sec_M(\dot{c},J)+\frac{|J'|^2 }{|J|^2 }\sin^2( \sphericalangle(J^{\; '},J)).
 $$
 Thus in  negative
 curvature  $|J|$ is a strictly convex function.  But in positive curvature
 $|J|$ does not have any distinctive properties. For example,
 the Hopf action on a round sphere induces a Killing vector field of
 constant length.

 \smallskip

 For positive curvature we suggest the concept of a ``virtual"
 Jacobi field. For this it is best to study Jacobi fields via Jacobi tensors. Let $A_t$ be a solution of the
 differential equation
 $$
A''+RA=0
 $$
where  $E_t=\dot{c}(t)^\perp\subset T_{c(t)}M$ and, after a choice
of a base point $t_0$,  $A_t\colon E_{t_0}\to E_t$ and
$R=R(\cdot,\dot{c})\dot{c}\colon E_t\to E_t$. $A$ is uniquely
determined by $A_{t_0}$ and $A'_{t_0}$. Thus for any $v\in E_{t_0}$,
$J(t)=A_tv$ is a Jacobi field along $c$. We denote by $A^*$ the
adjoint of $A$ and call a point $c(t^*)$ regular if $A_{t^*}$ is
invertible. The Jacobi tensor $A$ is called a Lagrange tensor if $A$
is non-degenerate (i.e. Av is not the 0-Jacobi field  for all $v$)
and $S:=A'A^{-1}$ is symmetric at regular points. Equivalently, $S$
is the shape operator of a family of parallel hypersurfaces
orthogonal to $c$.

\begin{main} Let $A$ be a Lagrange tensor along the geodesic $c$ and $v\in E_{t_0}$ non-zero.
 Define $Z_t=(A_t^*)^{-1}v$ and let $g=g_v(t)=\dfrac{||v||^2}{||Z_t||}$. Then
\begin{enumerate}
\item[(a)] $g_v(t)\le ||A_tv||$ and at  regular points
$$\dfrac{g''}{g}=-\sec_M(\dot{c},Z)-
3\frac{|SZ|^2 }{|Z|^2 }\sin^2(\sphericalangle(SZ,Z)).$$
\item[(b)] $g_v(t)$ is continuous for all $t$. Furthermore, it is smooth (and positive) at $t=t^*$ iff $v\bot \ker A_{t^*}$.
\item[(c)] If $\sec_M\ge 0$ (resp. $\sec_M>0$), then $g_v$ is concave
(resp. strictly concave) on any interval where $g_v$ is
positive. If  $g_v$ is constant, then the virtual Jacobi field
$Z$  is a parallel Jacobi field, and if $A_{t_0}=\Id$, then
$Z_t=A_tv$.
\end{enumerate}
  \end{main}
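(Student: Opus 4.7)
For (a), the inequality $g_v(t)\le |A_tv|$ is a direct Cauchy--Schwarz application to $|v|^2 = \langle A_t^*Z_t, v\rangle = \langle Z_t, A_tv\rangle$. For the ODE, my plan is to first derive an evolution equation for $Z$ itself. Differentiating the defining identity $A^*Z=v$ gives $Z' = -(A^*)^{-1}(A^*)'Z$, and using that $S=A'A^{-1}$ is self-adjoint (the Lagrange condition) this simplifies to $Z' = -SZ$. Combining with the Riccati equation $S' = -R - S^2$ (which follows from $A''=-RA$) yields
\[
Z'' = -S'Z - SZ' = RZ + 2S^2Z,
\]
so $Z$ is not a Jacobi field but a perturbation of one by a shape-operator term -- the ``virtual'' Jacobi field. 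One then computes $|Z|''/|Z|$ using $\langle Z'',Z\rangle = \langle RZ,Z\rangle + 2|SZ|^2$ and $|Z'|^2=|SZ|^2$, and substitutes into $g''/g = -|Z|''/|Z| + 2(|Z|'/|Z|)^2$; the cross terms combine, via $|SZ|^2/|Z|^2 - \langle SZ,Z\rangle^2/|Z|^4 = (|SZ|^2/|Z|^2)\sin^2\sphericalangle(SZ,Z)$, to give the stated formula.

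\smallskip

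For (b), the key identity is $|Z_t|^2 = \langle v,(A_t^*A_t)^{-1}v\rangle$, which reduces the question to how $(A_t^*A_t)^{-1}$ degenerates at $t^*$. Decomposing $v=v_0+v_1$ along $E_{t_0} = \ker A_{t^*}\oplus(\ker A_{t^*})^\perp$ and observing that $\Im A_{t^*}^* = (\ker A_{t^*})^\perp$ forces the $(1,1)$-block of $A_t^*A_t$ in this adapted basis to vanish at $t^*$, a Taylor expansion combined with a Schur complement argument gives two outcomes: if $v_0\ne 0$ then $|Z_t|\to\infty$, so $g_v$ extends continuously by $0$; if $v_0=0$, then the restriction of $(A_t^*A_t)^{-1}$ to the second block extends smoothly across $t^*$, making $g_v$ smooth and positive at $t^*$. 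This yields both the continuity and the ``iff'' assertion.

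\smallskip

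For (c), with (a) in hand, concavity is immediate: if $\sec_M\ge 0$, both terms in $g''/g$ are non-positive, so $g''\le 0$ on $\{g_v>0\}$; strict concavity under $\sec_M>0$ is analogous. If $g_v$ is constant, then $g''\equiv 0$ forces $\sec_M(\dot c,Z)\equiv 0$ and $|SZ|^2\sin^2\sphericalangle(SZ,Z)\equiv 0$, so $SZ=\lambda Z$ for some function $\lambda$. Combined with $|Z|'=0$ and $Z'=-SZ=-\lambda Z$, we get $0=(|Z|^2)'=-2\lambda|Z|^2$, so $\lambda\equiv 0$; thus $Z$ is parallel, and $Z''=RZ+2S^2Z=RZ$ together with $Z''=0$ forces $RZ=0$, so $Z$ is a parallel Jacobi field. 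If moreover $A_{t_0}=\Id$, then $Z(t_0)=v$ and $Z'(t_0)=-SZ(t_0)=0=A'_{t_0}v=(A_tv)'(t_0)$, so uniqueness of Jacobi fields gives $Z_t=A_tv$.

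\smallskip

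The main technical obstacle is the smoothness half of (b), which requires careful local analysis of the degenerating operator $A_t^*A_t$ via block matrix calculus and Taylor expansion at the conjugate point $t^*$; parts (a) and (c) are essentially forced once the virtual Jacobi field equation $Z''=RZ+2S^2Z$ is in hand.
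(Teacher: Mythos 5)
Your derivation of (a) is correct and, while algebraically identical to the paper's, is organized somewhat more transparently: you work directly with the virtual Jacobi field $Z=(A_t^*)^{-1}v$, derive $Z'=-SZ$ (using self-adjointness of $S$) and $Z''=(R+2S^2)Z$, and substitute into $g''/g=-|Z|''/|Z|+2\bigl(|Z|'/|Z|\bigr)^2$. The paper proves the same formula by differentiating $f_v=|Z|^2=g_v^{-2}$ twice and converting back; it records your ODE for $Z$ only as a side remark. Your Cauchy--Schwarz step for $g_v\le \|A_t v\|$ and your argument for (c) --- forcing $SZ=\lambda Z$, then $\lambda\equiv 0$ from $|Z|'=0$, hence $Z'=0$ and $RZ=0$, plus the uniqueness argument for $Z_t=A_tv$ when $A_{t_0}=\Id$ --- coincide with the paper's in substance.

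For (b), however, there is a genuine omission. Your Schur-complement plan is a viable alternative to the paper's direct inversion of $A^*$ via its adjugate, but as sketched you never identify the one step where the \emph{Lagrange} hypothesis actually enters, and the argument fails without it. Write $E_{t_0}=V_0\oplus V_1$ with $V_0=\ker A_{t^*}$. The $(1,1)$-block of $A_t^*A_t$ is $\approx(t-t^*)^2\, (A'_{t^*})^*A'_{t^*}|_{V_0}$, positive definite by non-degeneracy, and the $(2,2)$-block is positive definite at $t^*$; that much holds for any non-degenerate Jacobi tensor. But the off-diagonal block $\langle A_tv_0,A_tv_1\rangle$ is only $O(t-t^*)$ a priori, and then the cross term $P^{-1}Q$ in the Schur formula blows up like $(t-t^*)^{-1}$, destroying the smooth extension. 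It is exactly the self-adjointness identity $\langle A'_{t^*}v_0,A_{t^*}v_1\rangle=\langle A_{t^*}v_0,A'_{t^*}v_1\rangle=0$ that upgrades the off-diagonal block to $O\bigl((t-t^*)^2\bigr)$ and rescues the argument. (The paper packages this as: the columns $A'e_1,\dots,A'e_k,Ae_{k+1},\dots,Ae_n$ at $t^*$ are linearly independent, proved by this same self-adjointness computation.) The assertion ``$\Im A_{t^*}^*=(\ker A_{t^*})^\perp$ forces the $(1,1)$-block to vanish'' is beside the point --- the $(1,1)$-block vanishes trivially because $A_{t^*}|_{V_0}=0$; the nontrivial issue is the \emph{order} of vanishing of the diagonal and off-diagonal blocks, and the latter needs the Lagrange condition. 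You should make this explicit, since (b) is genuinely false for non-degenerate Jacobi tensors that are not Lagrange.
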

\noindent  Notice that for a surface $g_v=||A_tv||$ is simply the
length of
  the
  Jacobi field.

  \smallskip

  As an immediate consequence one has the following result by
  B.Wilking \cite{Wi} which was crucial in proving the smoothness of the
  Sharafudinov
  projection in the soul theorem: If $M$ has non-negative sectional
  curvature and $A$ is a Lagrange tensor defined along $c$ for {\it all} $t$,
  normalized so that $A_{t_0}=\Id$,
  then one has an orthogonal splitting
  $$
E_{t_0}=\spam\{v\in E_{t_0}\mid A_tv=0 \text{ for some } t\in\R\}\oplus
 \{v\in E_{t_0}\mid A_tv \text{ is parallel for all } t\in\R \}.
  $$

\smallskip

  There is another well known
 concave function in positive curvature given in terms of the volume
 along the geodesic: if $A$ is Lagrange, then  $(\det A_t)^{1/n}$
  is concave if $\Ric\ge 0$.
 One of the advantages of the class of concave functions in Theorem
 A is that by part (b) and (c), some of them are well defined and concave at singular
 points of $A$, whereas $\det A$  vanishes at such points.
  This property of $g_v$ is crucial in our
 applications.

 \smallskip

 There exists a sequence of concave functions interpolating between $g_v$ and
 the volume. For each $p$-dimensional subspace $W\subset E_{t_0}$ set
 $$
g_W(t)=(\det M_t)^{-1/2p} \quad \text{ where }  \ml M_te_i,e_j\mr=\ml (A_t^*)^{-1}e_i,(A_t^*)^{-1}e_j\mr
=\ml\, (A^*A)^{-1}e_i,e_j \mr
 $$
 and $e_1,\dots,e_p$ is an orthonormal basis of $W$. If $W$ is one
 dimensional, $g_W=g_v$ with $v$ a unit vector in $W$, and if $W=E_{t_0}$
 then $g_W=(\det A_t)^{1/n}$.

 Recall that a
manifold is said to have $p$-positive sectional curvature if the sum
of the $p$ smallest eigenvalues of $R(\cdot,v)v$ is positive for all
$v$. Thus $p=1$ is positive sectional curvature and $p=n$ is
positive Ricci curvature.

 \begin{main} Let $A$ be a Lagrange tensor along the geodesic $c$ and $W\subset E_{t_0}$
 a $p$-dimensional subspace.
 \begin{enumerate}
\item[(a)] If the $p$-sectional curvature is non-negative (resp. positive), then $g_W$ is concave
(resp. strictly concave) on any interval where $g_W$ is
positive.
 \item[(b)] $g_W$ is smooth (and positive) at $t=t^*$ iff $W\bot \ker A_{t^*}$.
\item[(c)] If the $p$-sectional curvature is non-negative and $g_W$ is constant, then
$(A_t^*)^{-1}v$ is a parallel Jacobi field for all $v\in W$.
\end{enumerate}
\end{main}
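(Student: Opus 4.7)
The strategy is to generalize the proof of Theorem A from a single norm $\|Z\|$ to $g_W = (\det M)^{-1/(2p)}$. Let $Z_i = (A_t^*)^{-1} e_i$ for $e_1,\dots,e_p$ an orthonormal basis of $W$; since $B = (A^*)^{-1}$ satisfies $B' = -SB$ by the symmetry of $S$ (which gives $(A')^* = A^* S$), each $Z_i$ obeys $Z_i' = -SZ_i$ and $Z_i'' = RZ_i + 2S^2 Z_i$ via the Riccati equation $S' + S^2 + R = 0$. One directly computes $M'_{ij} = -2\langle SZ_i, Z_j\rangle$ and $M''_{ij} = 2\langle RZ_i, Z_j\rangle + 6\langle SZ_i, SZ_j\rangle$. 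Setting $u = \log g_W = -\tfrac{1}{2p}\log\det M$, the identities $(\log\det M)' = \tr(M^{-1}M')$ and $(\log\det M)'' = \tr(M^{-1}M'') - \tr((M^{-1}M')^2)$ express $g_W''/g_W = u'' + (u')^2$ entirely in terms of these traces.

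At each $t$ with $g_W(t) > 0$ (so $M(t)$ is positive definite), I change basis within $W$ so that $v_i := Z_i(t)$ is orthonormal in $E_t$, reducing $M$ to the identity at that instant; extending $v_1,\dots,v_p$ to an orthonormal basis $v_1,\dots,v_p,f_{p+1},\dots,f_n$ of $E_t$ and setting $V = \spam\{v_i\}$, $P_{ij} = \langle Sv_i, v_j\rangle$, a substitution collapses the expression for $g_W''/g_W$ to the identity
\[
-\frac{p\,g_W''}{g_W} \;=\; \tr(R|_V) \;+\; \Bigl(\tr(P^2) - \tfrac{1}{p}(\tr P)^2\Bigr) \;+\; 3\sum_{i=1}^p \sum_{j=p+1}^n \langle Sv_i, f_j\rangle^2,
\]
which I regard as the natural $p$-dimensional generalization of Theorem A(a). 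Each of the three terms on the right is nonnegative: $\tr(R|_V)$ majorizes the sum of the $p$ smallest eigenvalues of $R(\cdot,\dot c)\dot c$ on $\dot c^\perp$, which is $\ge 0$ by the $p$-curvature hypothesis; Cauchy--Schwarz applied to the symmetric matrix $P$ yields the second; the third is a sum of squares. This proves (a), and strict $p$-positive sectional curvature makes the first term strictly positive, giving strict concavity.

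For part (c), $g_W$ constant forces $g_W' = 0$ (whence $\tr P = 0$) and each of the three nonnegative terms above to vanish. Combined with $\tr P = 0$, vanishing of the second term gives $\tr(P^2) = 0$ and hence $P = 0$; vanishing of the third forces $Sv_i \in V$; together, $Sv_i = 0$ for all $i$, so $Z_i' = -SZ_i = 0$ and each $Z_i$ is parallel. Since then $Z_i'' = 0$, and $Z_i'' = RZ_i + 2S^2 Z_i$ with $SZ_i = 0$ gives $RZ_i = 0$, each $Z_i$ satisfies the Jacobi equation. For part (b), the Schur-complement identity
\[
g_W^{2p} \;=\; \frac{\det(A^*A)}{\det\bigl((A^*A)|_{W^\perp}\bigr)},
\]
derived from the block decomposition of $A^*A$ along $W \oplus W^\perp$, reduces the question to comparing vanishing orders at $t^*$. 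Writing $K = \ker A_{t^*}$ and Taylor-expanding $A_t$ adapted to $K$, the orders match precisely when $K \subset W^\perp$, i.e., when $W \perp K$; otherwise the mismatch drives $g_W(t^*)$ to zero, showing that smoothness and positivity at $t^*$ is equivalent to $W \perp \ker A_{t^*}$. I expect this degenerate-Schur-complement analysis to be the main technical hurdle.
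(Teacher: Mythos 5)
Your proofs of parts (a) and (c) follow essentially the same route as the paper (its Proposition 2.7, the proof of Theorem B, and the Remark preceding Proposition 2.7). You compute $M'_{ij}$, $M''_{ij}$ exactly as the paper does, and after normalizing $M(t^*)=\Id$ at a fixed instant you arrive at precisely the paper's identity
$$-p\,\frac{g_W''}{g_W}=\sum_i\ml RZ_i,Z_i\mr+\Bigl(\tr S_1^2-\tfrac1p(\tr S_1)^2\Bigr)+3\tr(S_2^TS_2),$$
with your $P=S_1$ and $\sum\langle Sv_i,f_j\rangle^2=\tr(S_2^TS_2)$. The small variations — passing through $u=\log g_W$, and in (c) finishing by reading $RZ_i=0$ off the equation $Z_i''=RZ_i+2S^2Z_i$ rather than quoting Proposition 3.1 — are cosmetic; the paper's citation of Proposition 3.1 serves exactly the same purpose (showing $A^{-*}v=A_tw$ for a constant $w$, hence a genuine Jacobi field) and your direct verification is equally clean. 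The concavity argument via the trace inequality $\tr(R|_V)\ge\sum_{i=1}^p\lambda_i(R)$ and the Cauchy--Schwarz bound $\tr P^2\ge\frac1p(\tr P)^2$ is identical in substance to the paper's.

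For part (b) you take a genuinely different route. The paper simply invokes its Proposition 2.1 (smoothness of each $A^{-*}e_i$ across a singularity when $e_i\perp\ker A_{t^*}$) together with the linear-independence statement from that proof, which gives smoothness and positive-definiteness of $M$ directly. You instead propose the Schur-complement identity $g_W^{2p}=\det(A^*A)/\det\bigl((A^*A)|_{W^\perp}\bigr)$ and compare vanishing orders at $t^*$: the numerator vanishes like $t^{2k}$ where $k=\dim\ker A_{t^*}$, while the denominator vanishes like $t^{2j}$ with $j=\dim(\ker A_{t^*}\cap W^\perp)$. The identity is correct and the order count does work, but you should note that obtaining the exact order $2j$ for the denominator (rather than higher) requires the transversality between $\im(A'_0|_{K\cap W^\perp})$ and $\im(A_0|_{W^\perp})$, which is supplied by the same non-degeneracy argument (``the matrix $N$ is non-singular'') that the paper uses inside the proof of Proposition 2.1; without it the leading coefficient could degenerate. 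Also, your order count shows $g_W(t^*)$ is \emph{positive} when $W\perp\ker A_{t^*}$, but for the full claim of \emph{smoothness} you still need to observe (as the paper does) that each $A^{-*}e_i$ extends smoothly across $t^*$, so that $M_t$ is a smooth positive-definite matrix-valued function; the Schur-complement ratio alone gives continuity of $g_W^{2p}$ but not immediately its smoothness. Both points are fixable, but as written they are gaps in the sketch, and the paper's route through Proposition 2.1 avoids them. Finally, a small logical point: part (a) as stated asserts concavity across singular points in the interior of an interval where $g_W>0$, so part (b) (or at least the smoothness consequence of $W\perp\ker A_{t^*}$) is logically needed before (a) can be asserted at such points; you present (a) first and should flag this dependency.
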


\smallskip

The example of positive curvature in \cite{GVZ} arose from a
systematic study of \emph{cohomogeneity one} manifolds, i.e.,
manifolds with an isometric action whose orbit space is one
dimensional, or equivalently the principal orbits have codimension
one. A classification of positively curved cohomogeneity one
manifolds was carried out  in even dimensions in
 \cite{verdiani:1,verdiani:2}  and   in odd dimensions an exhaustive description was given in \cite{GWZ} of all simply
connected cohomogeneity one manifolds that can possibly support an
invariant metric with positive curvature. In addition to some of the
known examples
 of positive curvature which admit isometric \coo\ actions, two infinite families,
 $P_{k}^7, Q_{k}^7, k\ge 1,$ and one exceptional
manifold $R^7$, all of dimension seven and admitting a \coo\ action
by $\SO(4)$, appeared as the only possible new candidates, see
Section 4 (as well as \cite{Z2})
 for a more detailed description.
  Here $P_1^7$ is the 7-sphere and $Q_1^7$ is the
normal homogeneous positively curved Aloff-Wallach space. The
manifold $P_2^7$ is the new example of positive curvature in
\cite{GVZ}.

These candidates belong to two much larger classes of cohomogeneity
one manifolds depending on 4 integers, described in terms of the
isotropy groups, see Section 4. One is denoted by $P_{
(p_-,q_-),(p_+,q_+)}$, a family of cohomogeneity one manifolds with
$\pi_1=\pi_2=0$, and a second by $Q_{ (p_-,q_-),(p_+,q_+)}$, where
$\pi_1=0,\; \pi_2=\Z$. They all admit a cohomogeneity one action by
$G=\SO(4)$.
  In terms of these, the candidates for positive
 curvature are given by $P_k=P_{(1,1),(1+2k,1-2k)}$,
  $Q_k=Q_{(1,1),(k,k+1)}$, with $k\ge 1$, and the exceptional manifold $R^7=Q_{(3,1),(1,2)}$.

\smallskip

\begin{main}
Let $M$ be one of the $7$-manifolds $Q_{ (p_-,q_-),(p_+,q_+)}$ with
its cohomogeneity one action by $G=\SO(4)$ and assume that $M$ is
not  of type  $Q_k,k\ge 0$. Then there exists no analytic metric
with non-negative sectional curvature invariant under $G$, although
there exists a smooth one.
  \end{main}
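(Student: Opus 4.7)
I would work along a normal geodesic $c\colon[-L,L]\to M$ joining the two singular orbits $B_-=G\cdot c(-L)$ and $B_+=G\cdot c(L)$, and exploit the concavity of the virtual Jacobi fields $g_v$ of Theorem~A. The argument has four steps: (i) assemble a Lagrange tensor $A$ along $c$ from the Killing fields of the $G=\SO(4)$ action and identify $\ker A_{\pm L}$ in terms of the isotropy data $(p_\pm,q_\pm)$; (ii) exhibit a vector $v\in E_0=\dot c(0)^\perp$ for which $g_v(\pm L)=0$; (iii) combine Theorem~A(c) with endpoint asymptotics to force $g_v$ to saturate concavity, then invoke analyticity and the rigidity clause of Theorem~A(c) to conclude that $Z_t=(A_t^*)^{-1}v$ is a non-trivial parallel Jacobi field along $c$; (iv) convert this rigidity into an algebraic condition on $(p_\pm,q_\pm)$ that singles out the $Q_k$ subfamily.

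For step~(i), normalize $A_0=\Id$ and identify $\ker A_{\pm L}\subset E_0$ with $\fk_\pm/\fh$ via the slice representation at $B_\pm$. The standard smoothness conditions for invariant metrics on cohomogeneity one manifolds then prescribe the Taylor expansion of $A_t$ at $t=\pm L$ in a slice-adapted orthonormal frame, with vanishing rates determined by the weights $p_\pm,q_\pm$. For step~(ii), when $M$ is not of type $Q_k$ one can find, using the explicit embeddings of $K_\pm$ in $\SO(4)$, a common $v\in E_0$ that fails to be perpendicular to either $\ker A_{-L}$ or $\ker A_L$; by Theorem~A(b) this gives $g_v(\pm L)=0$.

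For step~(iii), Theorem~A(c) renders $g_v$ non-negative and concave on $[-L,L]$. Matching the concavity inequality with the asymptotics of $g_v$ at both endpoints forces the concavity to be saturated on some open subinterval, so $g_v''\equiv 0$ and $g_v$ is affine there. Since $A_t$, and hence $g_v$, is real-analytic in $t$, linearity on a subinterval extends to all of $(-L,L)$; the rigidity clause of Theorem~A(c) then produces a non-trivial parallel Jacobi field $Z_t$ along $c$. Finally, step~(iv) interprets $Z$ via $G$-equivariance as a flat distribution in the orbit directions: a direct curvature computation shows that the slice weights $(p_\pm,q_\pm)$ compatible with such a parallel Jacobi field are exactly those of the $Q_k$ family.

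The main obstacle will be step~(iii), specifically extracting from the singular-orbit expansion of $A_t$ the precise inequality needed to saturate Theorem~A(c) simultaneously at $t=\pm L$. The fact that smooth invariant metrics with $\sec\ge 0$ do exist on every $Q_{(p_-,q_-),(p_+,q_+)}$, by Grove--Ziller, shows that this saturation can occur on a proper subinterval without contradiction in the smooth category; it is precisely the analytic continuation of linearity of $g_v$ to the whole interval that produces the parallel virtual Jacobi field and the ensuing topological obstruction.
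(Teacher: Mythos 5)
There is a genuine gap in your step (iii), and it is fatal to the argument as you have arranged it. You propose to find $v$ so that $g_v(\pm L)=0$ and then to extract a contradiction from concavity. But Theorem A(c) only says that $g_v$ is concave on any interval where $g_v$ is \emph{positive}; a concave function that vanishes at both endpoints of such an interval is completely unconstrained in between (think of a bump), so nothing forces the concavity to saturate. Likewise, the claim that ``linearity on a subinterval extends by analyticity'' does not get off the ground: if $g_v$ were affine on all of $(-L,L)$ and vanished at $\pm L$, it would be identically zero, contradicting the positivity you need. The paper's mechanism is in fact the opposite of what you propose: one picks $v$ so that $g_v>0$ and $g_v'=0$ at \emph{both} endpoints of a carefully chosen subinterval (this forces $g_v$ constant by concavity, hence a parallel ``virtual'' Jacobi field there), and then uses a singular point of the extended geodesic \emph{outside} that subinterval, where $v$ is \emph{not} orthogonal to $\ker A_t$, to conclude $g_v$ must vanish and therefore the Jacobi field cannot be parallel globally.

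Two further points. First, your step (ii) has the wrong sign: you look for $v$ failing to be perpendicular to $\ker A_{\pm L}$, whereas the paper needs $v$ orthogonal to the kernels at the chosen endpoints (conditions (a)--(d) of Proposition~\ref{rigidity2}) and then exploits the failure of orthogonality at a \emph{later} singular point ($t=3L$ or $7L$), accessible only because one works on the full closed normal geodesic governed by the Weyl group of the action, not on a single $[-L,L]$. Your outline never invokes the Weyl group, but the whole point of the $Q$-family computation is to list all isotropy groups along $c$ at $t=0,L,2L,\dots,8L$ and match up second-fundamental-form vanishing (Observation~2) with later kernel non-orthogonality. Second, step (iv) is not how the $Q_k$ case is distinguished: the paper does not do a curvature computation; the exceptional slopes fall out directly from the representation-theoretic constraints ($k=4$ at even multiples of $L$, $k=2$ at odd multiples, and the weight conditions $p_\pm\ne 1$ or $p_+\pm q_+\ne\pm 1$), which determine when the hypotheses of Propositions~\ref{rigidity2} and \ref{nonparallel} can be satisfied. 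The use of analyticity at the very end is correct in spirit—an analytic Jacobi field with $(X^*)'\equiv0$ on an interval must be parallel everywhere—but you only get there once the rest of the argument has been reversed.
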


  The existence of a smooth metric with non-negative curvature follows from a more general
   result on cohomogeneity one manifolds in \cite{GZ1}. In particular we obtain:

  \begin{cor*}
The exceptional cohomogeneity one manifold $R^7$ does not admit an
invariant metric with positive sectional curvature.
  \end{cor*}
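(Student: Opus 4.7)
The plan is to deduce the Corollary from Theorem~C by a standard analytic approximation argument in the cohomogeneity one setting. First, I would observe that the exceptional manifold $R^7 = Q_{(3,1),(1,2)}$ lies inside the family to which Theorem~C applies: by inspection of the pair $(p_-,q_-)=(3,1)$, $(p_+,q_+)=(1,2)$, it is not of the form $Q_k = Q_{(1,1),(k,k+1)}$ for any $k\ge 0$. Hence Theorem~C furnishes the key input: there is no analytic $\SO(4)$-invariant Riemannian metric on $R^7$ with $\sec\ge 0$.

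To conclude the Corollary, I would argue by contradiction. Suppose $g$ is a smooth $\SO(4)$-invariant metric on $R^7$ with $\sec_g > 0$. Since $R^7$ is compact, there is a uniform lower bound $\sec_g \ge \eps > 0$. A $G$-invariant metric on a cohomogeneity one manifold is encoded by a smooth curve $t \mapsto Q_t$ of $\Ad(H)$-invariant inner products on $\fg/\fh$ for $t \in [-L,L]$, subject to explicit smoothness (Taylor) conditions at the singular orbits $t=\pm L$ governed by the isotropies $\K^{\pm}$. The plan is to approximate $Q_t$ by an analytic curve $\tilde Q_t^{(k)}$ of $\Ad(H)$-invariant inner products satisfying the \emph{same} boundary conditions at $t=\pm L$, with convergence $\tilde Q_t^{(k)} \to Q_t$ in $C^2$.

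Given such an approximation, the induced $G$-invariant metrics $\tilde g_k$ on $R^7$ are analytic, and since the sectional curvature is a $C^2$-continuous functional of the metric while $R^7$ is compact, for $k$ large one has $\sec_{\tilde g_k} \ge \eps/2 > 0$, in particular $\tilde g_k$ is an analytic $G$-invariant metric with $\sec \ge 0$. This contradicts Theorem~C, proving the Corollary.

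The main obstacle is the approximation step: one must produce analytic families matching the smoothness conditions at $\pm L$ to all orders (these conditions describe how $Q_t$ extends across the singular orbit via the $\K^{\pm}$-representation on the slice, and are in general infinitely many equivariant relations on the Taylor coefficients). A clean way to handle this is to first extend $Q_t$ smoothly and equivariantly to an $\Ad(H)$-equivariant curve on a larger interval using the singular isotropy actions, then apply a convolution or heat-kernel analytic smoothing that commutes with the finite-dimensional linear constraints encoding equivariance, and finally restrict. Once this analytic-approximation lemma in the cohomogeneity one category is in place, the Corollary follows immediately from Theorem~C.
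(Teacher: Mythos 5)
Your argument takes a genuinely different route from the paper's, and as you acknowledge, the route hinges on an unproved analytic-approximation lemma; that lemma is the gap.

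The paper's intended deduction is direct and needs no analyticity. The proof of Theorem~C produces a Killing field $X^*$ and a vector $v$ such that $g_v$ (as in Theorem~A) is smooth and positive on an interval $[t_0,t_1]$ with $t_0<t_1$ and satisfies $g_v'(t_0)=g_v'(t_1)=0$; these derivative conditions follow from representation-theoretic constraints on the second fundamental form at the singular orbits. Under $\sec\ge 0$ one then gets, via concavity, a parallel Jacobi field, which together with \pref{nonparallel} contradicts analyticity. But under $\sec>0$, Theorem~A(c) makes $g_v$ \emph{strictly} concave wherever positive, so $g_v'$ is strictly decreasing and simply cannot vanish at two distinct points. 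That is an outright contradiction, with no analyticity or approximation involved; this is what ``In particular we obtain'' refers to. Your approximation route, by contrast, needs a real lemma: the smoothness conditions at a singular orbit constrain the entire Taylor expansion of the metric coefficients (they encode that the metric lifts to a smooth $\Kpm$-invariant tensor on the slice), and an analytic curve near that orbit is rigidly determined by its Taylor series, so convolution or heat-kernel smoothing will not in general reproduce the required boundary behavior. Making your argument precise would require exploiting the parity structure of these conditions and is nowhere carried out; the strict-concavity argument avoids the issue entirely.
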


  The method also applies to the family $P_{
(p_-,q_-),(p_+,q_+)}$. Here we will show that if the manifold is not
one of the candidates $P_k$ or of type $P_{ (1,\; q),(p\, ,\, 1)}$,
then there exists a $G$-invariant metric with non-negative sectional
curvature, but no $G$-invariant analytic metric with non-negative
curvature. On the other hand, the exceptional family $P_{ (1,\;
q),(p,1)}$ contains several $G$-invariant analytic metrics with
non-negative curvature since $P_{(1,1),(-3,1)}$ is $\Sph^7$,
$P_{(1,-3),(-3,1)}$ is the positively curved Berger space and
$P_{(1,1),(1,1)}=\Sph^3\times\Sph^4$. We do not know if any of the
other manifolds $P_{ (1,\; q),(p\, ,\, 1)}$ carry analytic metrics
with non-negative curvature.

\bigskip

   The proof of Theorem C is obtained as follows. For a cohomogeneity one $G$-manifold one chooses
    a geodesic $c$ orthogonal to all orbits. Then the action of $G$ induces Killing vector fields on $M$,
    which along $c$
    are Jacobi fields. They  give rise to a Lagrange tensor $A$, to which we can apply Theorem
    A. One then shows that  there exists a Jacobi field $A_tv$,
    and an interval $[a,b]$, such that the corresponding function
    $g_v$ has  derivatives equal to $0$ at the endpoints, and is
    positive on $[a,b]$.
   Thus, if the curvature is non-negative, Theorem A
    implies that $g_v$ is constant on $[a,b]$.
    On the other hand, one shows that $g_v$ must vanish at other singular
  points along $c$ due to smoothness conditions imposed by the group action.
   This implies that there exists a Jacobi field  which is
  parallel on $[a,b]$, but is not parallel at all points along $c$.

\smallskip

We finally discuss an application of Theorem B.  There is a third
family of $7$-dimensional manifolds $N_{p,q}$ on which
$G=\S^3\times\S^3$ acts by cohomogeneity one, see Section 4. We will
show:

\begin{main}
The cohomogeneity one manifolds $N_{p,q}$ have no invariant metric
with $2$-positive sectional curvature, and $N_{1,1}$ has no
invariant metric with $3$-positive sectional curvature.
\end{main}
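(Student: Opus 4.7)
The plan is to mimic the proof of Theorem~C, replacing Theorem~A by Theorem~B and working with $p$-dimensional subspaces $W\subset E_{t_0}$ in place of lines. Fix a normal geodesic $c(t)$, $t\in[t_-,t_+]$, perpendicular to all $G$-orbits, with $c(t_\pm)$ on the two singular orbits $B_\pm$, and choose a regular base point $t_0\in(t_-,t_+)$. Since the principal orbits form a parallel family of hypersurfaces perpendicular to $c$, their normal variation produces a Lagrange tensor $A_t\colon E_{t_0}\to E_t$ whose Jacobi fields $A_tv$ are precisely the restrictions to $c$ of the Killing fields generated by the $G=\S^3\times\S^3$ action.

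For the first claim, the key step is to produce a $2$-dimensional subspace $W\subset E_{t_0}$ with two properties: (i) $W\bot \ker A_{t_\pm}$, so that by Theorem~B(b) the function $g_W$ extends smoothly and positively to $[t_-,t_+]$; and (ii) $g'_W(t_-)=g'_W(t_+)=0$. The vanishing of the boundary derivatives is arranged by the reflection argument standard in cohomogeneity one geometry: at each singular orbit there is an element $\iota_\pm$ of the isotropy $H_\pm$ whose differential acts on $T_{c(t_\pm)}M$ as a reflection sending $\dot c(t_\pm)$ to $-\dot c(t_\pm)$ and implementing the symmetry $c(t_\pm-s)\leftrightarrow c(t_\pm+s)$. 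Transported to $E_{t_0}$ by parallel translation along $c$, $\iota_\pm$ becomes an involution $\tilde\iota_\pm$ of $E_{t_0}$; any subspace $W$ fixed set-wise by both $\tilde\iota_-$ and $\tilde\iota_+$ yields a function $g_W$ that is even in $t-t_\pm$ at each endpoint, forcing $g'_W(t_\pm)=0$. Once such a $W$ is exhibited, any $G$-invariant metric on $N_{p,q}$ with $2$-positive sectional curvature would, by Theorem~B(a), force $g_W$ to be strictly concave on $(t_-,t_+)$, and strict concavity is incompatible with the simultaneous vanishing of $g'_W$ at both endpoints.

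The second claim for $N_{1,1}$ is obtained by the same scheme with $\dim W=3$, exploiting the additional symmetry present in the isotropy data of $N_{1,1}$ to produce a $3$-dimensional common invariant subspace of $\tilde\iota_-$ and $\tilde\iota_+$ that is transverse to both kernels $\ker A_{t_\pm}$. The hard part of the proof is precisely the production of these subspaces: one must describe, for each parameter value $(p,q)$, the isotropies $H_\pm$, the involutions $\iota_\pm$, and the kernels $\ker A_{t_\pm}$ from the classification data of Section~4, and then verify that their common invariant subspaces contain one of dimension $2$ (resp.\ $3$ for $N_{1,1}$) meeting neither $\ker A_{t_-}$ nor $\ker A_{t_+}$. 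Since all the group-theoretic ingredients above are independent of the choice of invariant metric, the obstruction applies uniformly to every $G$-invariant metric, giving both conclusions simultaneously.
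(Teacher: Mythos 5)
Your scheme---find a subspace $W$ orthogonal to the kernels so that $g_W$ is smooth and positive, then derive a contradiction from Theorem~B---is the right idea, but it takes a more complicated and ultimately incomplete route than the paper. The paper does not invoke any reflection argument: it observes that the normal geodesic is \emph{closed} of length $4L$ (the Weyl group has order $4$ since $w_-=(-1,-1)$ and $w_+\in\{(\pm1,\pm1)\}$), so $g_W$ is a positive, concave, periodic function of $t$ and is therefore constant. That disposes of the boundary-derivative argument entirely, after which Theorem~B(c) gives parallel Jacobi fields and $R|_W=0$, hence $R$ is not $2$-positive. Moreover, the paper exhibits $W$ by a trivial dimension count: with $H=\{e\}$ one has $\dim E_{t_0}=6$, the kernel $\ker A_0=\spam\{X_1+Y_1,X_2+Y_2,X_3+Y_3\}$ coming from the $\Delta\S^3$ isotropy is $3$-dimensional, and $\ker A_L=\spam\{pX_1+qY_1\}$ is $1$-dimensional, so the orthogonal complement of their span is at least $2$-dimensional---and $3$-dimensional precisely when $\ker A_L\subset\ker A_0$, i.e.\ when $(p,q)=(1,1)$. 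What you call ``the hard part'' you leave entirely undone, and in the paper it is the easy part.

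Two further points. First, your reflection argument imposes the extra constraint that $W$ be invariant under the transported involutions $\tilde\iota_\pm$; you neither justify that such $W$ exists nor note that, for $N_{p,q}$, this condition is vacuous because $w_\pm$ lie in the center of $\S^3\times\S^3$, so $\Ad(w_\pm)=\Id$ fixes every Killing field and hence every subspace of $E_{t_0}$ in the relevant sense. Second, your closing assertion that ``all the group-theoretic ingredients are independent of the choice of invariant metric'' is inaccurate: orthogonality to the kernels is measured in the Riemannian metric on $E_{t_0}$, which does depend on the invariant metric. The argument is still metric-independent, but only because the kernels themselves, as subspaces of $E_{t_0}$, are determined purely by the group diagram, so the dimension count gives a suitable $W$ for every invariant metric.
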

In contrast, it was shown in \cite{GZ2}  that every simply connected
cohomogeneity one manifold carries an invariant metric with positive
Ricci curvature.

 \bigskip

The differential equation and its applications also hold if we
consider Jacobi fields only in a subbundle invariant under parallel
translation. This arises frequently in the presence of an isometric
group action. For example, a group action  is called polar if  there
exists a so called section $\Sigma$, which is an immersed
submanifold orthogonal to all orbits. Such a section must be totally
geodesic, and hence the group action gives rise to a self adjoint
family of Jacobi fields in the parallel subbundle orthogonal to
$\Sigma$.

   \bigskip

   In Section 1 we recall properties of the Riccati equation and prove Theorem A. In Section 2
   we prove Theorem B  and in Section 3 we
   discuss rigidity properties. Finally, in Section 4, we  prove Theorems C and D.

\smallskip

\bigskip

\smallskip

\section{Concavity}

\bigskip

In this section we present a new concavity result about Jacobi
fields, and first recall some standard notation, see e.g.
\cite{Es3,EH,EO}.
\smallskip

Let $c$ be a geodesic in a Riemannian manifold $M^{n+1}$ defined on
an interval $t_1\le t\le t_2$ and let $E_t=\dot{c}^\perp$ be the
orthogonal complement of $\dot{c}(t)\subset T_{c(t)}M$. For  a
vector field $X$ along $c$, orthogonal to $\dot{c}$, we denote by
$X'$ the covariant derivative $\nabla_{\dot{c}}X$.

Let $V$ be an $n$-dimensional vector space of Jacobi fields along
$c$ orthogonal to $\dot{c}$. Along the geodesic we have that $\ml
X',Y\mr -\ml X,Y'\mr$ is constant for any $ X,Y \in V$. If this
constant is $0$, $V$ is called {\it self adjoint}, i.e.
\begin{equation}\label{selfad}
\ml X',Y\mr =\ml X,Y'\mr , \ \text{ for all } X,Y \in V .
\end{equation}
We call $t$ regular if $X(t),\ X\in V$ span $E$ and singular
otherwise. One easily sees that
\begin{equation}\label{decomp}
E_t=\{X(t) \mid X\in V\}\oplus\{X'(t)\mid
X\in V\ \text{with}\ X(t)=0\}=: V_1(t)\oplus V_2(t)
\end{equation}
for all $t\in [t_1,t_2]$. Notice that self adjointness implies that
the decomposition is orthogonal.  In particular, the singular points
are isolated.

We fix a base point $t_0\in [t_1,t_2]$.  We can then describe the
set of Jacobi fields $V$ by a (smooth) family of linear maps
$A_t\colon E_{t_0}\to E_t$. It is standard to do this by assuming
the base point is regular and define $A_t v=X(t)$ for $X\in V$ with
$X(t_0)=v$. In this case $A_{t_0}=\Id$. But in the applications it
will be useful to allow the base point $t_0$ to be singular as well.

\begin{definition}\label{Jdef}
Let $V$ be selfadjoint family of Jacobi fields and fix $t_0\in
[t_1,t_2]$. Decompose $v\in E_{t_0}$ as $v=v_1+v_2$, $v_i\in
V_i(t_0)$, and define:
\begin{eqnarray*}
&A_t\colon E_{t_0}\to E_t \quad\colon\quad A_t v=X_1(t)+X_2(t)  \\
&\text{ where } X_1,X_2\in V \text{\, with } X_1(t_0)=v_1,\; X_1'(t_0)\in V_1,\; \text{ and }\; X_2(t_0)=0,\; X_2'(t_0)=v_2.
\end{eqnarray*}
\end{definition}

\noindent For this we observe:
\begin{lem}\label{Jinitial} Let $V$ be selfadjoint family of Jacobi fields and choose a base point $t_0$.
\begin{enumerate}
\item[(a)] Given $v\in E_{t_0}$, the Jacobi fields $X_1$ and $X_2$ in
\dref{Jdef} are well defined and unique.
\item[(b)] Given $X\in V$, there exists a unique $v\in E_{t_0}$ such that
$X=A_tv$.
\item[(c)] At
the base point $t_0$ we have, with respect to the orthogonal
decomposition $V_1\oplus V_2$:
$$
A_{t_0}=\left(
  \begin{array}{cc}
    \Id & 0 \\
    0 & 0 \\
  \end{array}
\right)\qquad
A_{t_0}'=\left(
  \begin{array}{cc}
    B & 0 \\
    0 & \Id \\
  \end{array}
\right)
$$
with $B$ self adjoint.\end{enumerate}
\end{lem}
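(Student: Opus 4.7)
The plan is to organize everything around the evaluation and derivative maps on $V$ at the base point $t_0$. I would first set $V_0:=\{X\in V\mid X(t_0)=0\}$, the kernel of the evaluation map $V\to V_1(t_0)$, $X\mapsto X(t_0)$, and observe that the derivative map $V_0\to V_2(t_0)$, $X\mapsto X'(t_0)$, is an isomorphism: surjectivity is just the definition of $V_2(t_0)$, and injectivity follows from uniqueness of Jacobi fields (a nonzero $X\in V_0$ cannot have $X'(t_0)=0$). This confirms the dimension count $\dim V_1(t_0)+\dim V_2(t_0)=n$ already noted in \eqref{decomp}, and orthogonality of the decomposition drops out of self-adjointness, since $\ml X(t_0),Y'(t_0)\mr=\ml X'(t_0),Y(t_0)\mr=0$ whenever $Y\in V_0$.

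For (a), I would construct $X_2$ first: it is simply the unique preimage of $v_2$ under the isomorphism $V_0\to V_2(t_0)$. For $X_1$, I would start with any $\tilde X_1\in V$ satisfying $\tilde X_1(t_0)=v_1$ (available because $v_1\in V_1(t_0)$), split $\tilde X_1'(t_0)=w_1+w_2$ along $V_1(t_0)\oplus V_2(t_0)$, and subtract the unique $Y\in V_0$ with $Y'(t_0)=w_2$; this kills the $V_2$-part of the derivative without disturbing the value $v_1$. Uniqueness of $X_1$ is then forced: any two candidates differ by an element of $V_0$, and an element of $V_0$ whose derivative at $t_0$ lies in $V_1(t_0)$ must vanish because its derivative lies in $V_1(t_0)\cap V_2(t_0)=0$.

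Part (b) follows at once: given $X\in V$, set $v_1:=X(t_0)$ and $v_2:=$ the $V_2(t_0)$-component of $X'(t_0)$; then the $X_1,X_2$ produced from $v=v_1+v_2$ by the construction in (a) reassemble to give $X$, and uniqueness of this decomposition is forced by (a).

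For (c), the block form of $A_{t_0}$ is immediate from $A_{t_0}v=X_1(t_0)+X_2(t_0)=v_1$. Differentiating at $t_0$ gives $A_{t_0}'v=X_1'(t_0)+v_2$, which is the identity on $V_2(t_0)$ and maps $V_1(t_0)$ into itself by the defining condition $X_1'(t_0)\in V_1$; I would call this restriction $B$. Self-adjointness of $B$ drops out of \eqref{selfad}: if $v,w\in V_1(t_0)$ have associated Jacobi fields $X,Y$, then $\ml Bv,w\mr=\ml X'(t_0),Y(t_0)\mr=\ml X(t_0),Y'(t_0)\mr=\ml v,Bw\mr$. The only point requiring any care is the uniqueness argument in (a), and even that reduces to the intersection identity $V_1(t_0)\cap V_2(t_0)=0$; the rest is bookkeeping around the isomorphism $V_0\to V_2(t_0)$.
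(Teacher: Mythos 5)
Your proof is correct and follows essentially the same approach as the paper: the construction of $X_1$ by subtracting the $V_2$-component of the derivative is identical, the uniqueness argument is exactly what the paper means by ``follows from \eqref{decomp},'' and part (c) is verified directly from the definition and self-adjointness. You simply make explicit what the paper leaves to the reader, by isolating the kernel $V_0$ and the isomorphism $V_0\to V_2(t_0)$ as organizing devices.
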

\begin{proof}
(a) Existence of $X_2$ is clear. As for $X_1$,  first choose $Y_1\in
V$ with $Y_1(t_0)=v_1$ and set $Y'_1(t_0)=w_1+w_2$ with $w_i\in
V_i(t_0)$. By \eqref{decomp}, there exists a $Y_2\in V$ such that
$Y_2(t_0)=0$ and $Y_2'(t_0)=w_2$. Then set $X_1=Y_1-Y_2$.
Uniqueness clearly follows from \eqref{decomp} as well.

(b) Given $X\in V$, set $v_1:=X(t_0)$ and $X'(t_0)=w_1+w_2$ with
$w_i\in V_i(t_0)$. There exists a unique $X_2\in V$ with
$X_2(t_0)=0$ and $X_2'(t_0)=w_2$. Setting $X_1:=X-X_2$ we see that
$X=A_tv$ with $v=v_1+w_2$.

Part (c) is clear from the definition and self adjointness.
 \end{proof}

 \noindent Thus
$V$ is indeed uniquely described in terms of $A_t$. Notice though
that $A_{t_0}v=v$ for all $v\in E_{t_0}$ if only  if $t_0$ is
regular.

\smallskip

A point $t$ is regular for $V$ if and only if $A_t$ is invertible.
At regular points $t$ one defines the Riccati operator:
\begin{equation}
S_t\colon E_t\to E_t\ \text{ where }\ S_t v=X'(t) \text{ for  } X\in V \text{ with } X(t)=v,
 \text{ i.e. } A'_t=S_tA_t\; .
\end{equation}
Thus  the family of Jacobi fields $V$ is self adjoint iff $S_t$ is
self adjoint.   $A_t$ satisfies the Jacobi equation and $S_t$ the
Riccati equation:
\begin{equation}\label{Riccati}
A''+RA=0 \ \text{ if and only if }\ S'+S^2+R=0\ \text{ and  }\ A'=SA
\end{equation}
where $R=R_t\colon E_t\to E_t$ is the  self adjoint curvature
endomorphism $R(\ \cdot\ ,\dot{c}),\dot{c}$.

Conversely, let $A_t\colon E_{t_0}\to E_t$ be a solution of
\eqref{Riccati}. We say that $A_t$ is {\it non-degenerate}, if $\ker
A_{t_0}\cap \ker A'_{t_0}=0$. Furthermore, $A_t$ is called a {\it
Lagrange tensor} if $A_t$ is non-degenerate and $S_t$ is self
adjoint. A Lagrange tensor defines an $n$-dimensional family of
Jacobi fields $V=\{A_tv\mid v\in E_{t_0}\}$ which is self adjoint.

  We point out that if $A_t$ is Lagrange,
 then $A_t\circ F$, for any fixed linear isomorphism $F\colon E_{t_0}\to E_{t_0}$, is also a
 Lagrange tensor, in fact with the same tensor $S$.
 Furthermore, if $S_t$ is self adjoint at one point, it is self
 adjoint at all points.
Notice also  that if two Lagrange tensors $A_t$ and $\tilde{A}_t$,
with base points $t_0$ and ${\tilde{t}}_0$, give rise to the same
self adjoint family $V$,  they differ from each other by a linear
isomorphism $F:E_{{t}_0}\to E_{\tilde{t}_0}$. Indeed, if $v\in
E_{t_0}$ and hence $A_tv\in V$, then \lref{Jinitial} implies that
there exists a unique $w\in E_{\tilde{t}_0}$ with $A_t v=\tilde{A}_t
w$. Then $F(v)=w$ clearly defines an isomorphism with
$\tilde{A}_t\circ F=A_t$. This applies in particular if we choose a
different base point  when defining $A_t$ in terms of $V$. Thus
Lagrange tensors, modulo composing with $F$, are in one to one
correspondence with
 n-dimensional vector spaces of Jacobi fields which are self adjoint.

 \smallskip

 From now on let $A$ be a Lagrange tensor. Thus for any  $v\in E_{t_0}$, $A_tv$ is a Jacobi
 field, and
 $t$ is regular if and only if $A_t$ is invertible. Furthermore,
\begin{equation}\label{selfad2}
\ml A'_tv,A_tw\mr=
\ml A_tv,A_t'w\mr \text{ for all } t \text{ and } v,w \in E_{t_0}.
\end{equation}
Notice that here we do not assume that $A_{t_0}$ has any special
form as is the case when $A$ is associated to $V$.
 When clear from context we simply write  $A=A_t,\ S=S_t$.

\smallskip

Let $A_t^*$ be defined by $\ml A^*_tv,w\mr=\ml v,A_tw\mr\ \text{ for
all } v\in E_t,\ w\in E_{t_0}$ and for simplicity set $
(A_t^*)^{-1}={A^{-*}_t}\colon E_{{t}_0}\to E_t$.

\smallskip

\noindent The main purpose of this section is to study the functions
$$g_v(t)=\dfrac{||v||^2}{||{A_t^{-*}} v||}\, , \quad v\in E_{t_0}.$$
The scaling guarantees that $g_{\lambda v}=\lambda g_v$. We first
discuss smoothness properties.

\begin{prop}\label{smooth} Let $A_t$ be a Lagrange tensor and fix a vector $v\in E_{t_0}$. Then
\begin{enumerate}
\item[(a)] The vector field ${A^{-*}_t}v$, and hence the function $g_v$,
 is  smooth outside of the singular set. If $t^*$ is a singular
point, then ${A^{-*}_t}v $ has a smooth extension at $t=t^*$ if
and only if $v$ is orthogonal to $\ker A_{t^*}$.
\item[(b)] $g_v$ is continuous for all $t$ and $g_v(t)> 0$ if and only if $v$ is orthogonal to $\ker A_{t}$.
\end{enumerate}
\end{prop}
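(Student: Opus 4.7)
The plan is standard outside the singular set of $A$: both $(A_t^*)^{-1}$ and hence $g_v$ are smooth wherever $A_t$ is invertible. The content of the proposition is the behavior at a singular point $t^*$, which I would analyze via a block decomposition together with a smooth-division argument.

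My first step is to exploit the observation (in the paragraph preceding the proposition) that two Lagrange tensors defining the same self-adjoint family $V$ differ by precomposition with a linear isomorphism $F\colon E_{t_0}\to E_{\tilde t_0}$. Applied with $\tilde t_0=t^*$ this gives a Lagrange tensor $\tilde A_t=A_t\circ F^{-1}$ with base point at the singular point, and $A_t^{-*}v=\tilde A_t^{-*}(F^{-*}v)$; under $F^{-*}$ the condition $v\perp\ker A_{t^*}$ translates exactly to $F^{-*}v\perp\ker\tilde A_{t^*}=V_2(t^*)$, so I may just assume $t_0=t^*$. Writing everything in the orthogonal decomposition $E_{t^*}=V_1(t^*)\oplus V_2(t^*)$, \lref{Jinitial}(c) gives, with $B$ self-adjoint,
$$A_{t^*}=\begin{pmatrix} \Id & 0 \\ 0 & 0 \end{pmatrix},\qquad A'_{t^*}=\begin{pmatrix} B & 0 \\ 0 & \Id \end{pmatrix}.$$

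Passing to adjoints and Taylor expanding with $\tau=t-t^*$, the blocks of $A_t^*$ satisfy
$$(A_t^*)_{11}=\Id+O(\tau),\quad (A_t^*)_{12},\ (A_t^*)_{21}=O(\tau^2),\quad (A_t^*)_{22}=\tau\Id+O(\tau^2),$$
the decisive feature being that both \emph{off-diagonal blocks vanish to second order}. Writing $v=v_1+v_2$ and $Z_t=Z_1(t)+Z_2(t)$ with $v_i,Z_i(t)\in V_i(t^*)$, the system $A_t^*Z_t=v$ reduces by a block Schur complement to
$$\Sigma(t)\,Z_2(t)=v_2-(A_t^*)_{21}(A_t^*)_{11}^{-1}v_1,\qquad \Sigma(t)=(A_t^*)_{22}-(A_t^*)_{21}(A_t^*)_{11}^{-1}(A_t^*)_{12}=\tau\Id+O(\tau^2).$$
When $v\perp\ker A_{t^*}$, i.e.\ $v_2=0$, the right-hand side is $O(\tau^2)$ while $\Sigma$ has order $\tau$ with invertible leading coefficient. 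Hadamard's lemma then produces smooth factorizations $\Sigma(t)=\tau\,\tilde\Sigma(t)$ and $v_2-(A_t^*)_{21}(A_t^*)_{11}^{-1}v_1=\tau^2\,\tilde R(t)$ with $\tilde\Sigma(t^*)=\Id$ invertible, so $Z_2(t)=\tau\,\tilde\Sigma(t)^{-1}\tilde R(t)$ extends smoothly through $t^*$ with $Z_2(t^*)=0$; back-substituting gives $Z_1(t)=v_1+O(\tau)$ smooth, and $A_t^{-*}v$ extends smoothly with nonzero limit. Conversely, if $v_2\ne 0$ the same Schur equation forces $Z_2\sim\tau^{-1}v_2$, hence $\|A_t^{-*}v\|\to\infty$; this establishes (a).

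Part (b) is then immediate: at regular $t$, $g_v$ is smooth and positive; at a singular $t^*$ with $v\perp\ker A_{t^*}$, $\|A_t^{-*}v\|$ tends to a positive finite value by (a), so $g_v$ extends continuously and positively; and if $v\not\perp\ker A_{t^*}$, $\|A_t^{-*}v\|\to\infty$ forces $g_v(t)\to 0$, and continuity holds by setting $g_v(t^*)=0$. The main obstacle I foresee is pinning down the correct orders of vanishing in the Taylor expansion: one genuinely needs the off-diagonal blocks of $A_t^*$ at $t^*$ to be $O(\tau^2)$ rather than merely $O(\tau)$, and this is precisely what self-adjointness, via the vanishing first derivatives of $(A_t^*)_{12}$ and $(A_t^*)_{21}$ at $t^*$ encoded in \lref{Jinitial}(c), makes available.
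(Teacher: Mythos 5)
Your proof is correct but takes a genuinely different route from the paper's. The paper keeps the original base point $t_0$, works in a block decomposition adapted to $\ker A_0\subset E_{t_0}$, and invokes the adjugate (matrix of minors) formula for $A_t^{-*}$. The heart of that argument is a lemma that the matrix $N=\begin{pmatrix}X&Y\\Z&W\end{pmatrix}$ built from $A'_0|_{\ker A_0}$ and $A_0|_{(\ker A_0)^\perp}$ is invertible, which uses non-degeneracy and self-adjointness; from this the order of vanishing of $\det A_t$ and of each block of the cofactor matrix is read off, and smoothness or blow-up of $A_t^{-*}v$ follows by inspection. You instead change the base point to $t^*$ via the isomorphism $F$ from the preceding paragraph, which is legitimate and correctly transports both the vector field $A_t^{-*}v$ and the condition $v\perp\ker A_{t^*}$. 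This is a real simplification: at the base point Lemma~\ref{Jinitial}(c) hands you the full block form of $A_{t^*}$ \emph{and} $A'_{t^*}$, in particular that both off-diagonal blocks vanish to second order in $\tau=t-t^*$, a feature the paper never needs but which makes the Schur complement $\Sigma(t)=\tau\Id+O(\tau^2)$ and the right-hand side $O(\tau^2)$ transparent, so Hadamard factorization closes the argument without any cofactor computation. The trade-off is that your approach depends on the change-of-base-point observation and on Lemma~\ref{Jinitial}(c), whereas the paper's is self-contained relative to the definition of a Lagrange tensor; what you gain is a cleaner derivation of the precise rates of vanishing/blow-up, and the Schur mechanism makes it clearer \emph{why} orthogonality to $\ker A_{t^*}$ is exactly the right condition. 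Both proofs are sound; yours is a good alternative and arguably more conceptual.
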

\begin{proof} The first claim in part (a) is clear. For simplicity assume that the singular point is $t^*=0$.
Choose an orthonormal basis  $\{e_1,\ldots,e_{n}\}$ of $E_{t_0}$
such that $\{e_{1},\ldots,e_{k}\}$ is a basis of $\ker A_{0}$.

Choose $\e$ such that $A_t$ is non-singular for $t\in (0,\e]$. Then
$A_t$ has a block form (with respect to a parallel basis)
$$A_t=\left(
              \begin{array}{cc}
                tX & Y+tY_2 \\
                tZ & W+tW_2 \\
              \end{array}
            \right) + o(t^2) \text{ and hence }
            {A_t^*}=   \left(
              \begin{array}{cc}
                tX^T & tZ^T \\
                Y^T+tY_2^T & W^T+tW_2^T \\
              \end{array}
            \right) + o(t^2). $$
     We first claim that the matrix
            $$
N=\left(
  \begin{array}{cc}
    X & Y  \\
    Z & W \\
  \end{array}
\right)
            $$
           is  non-singular. This is equivalent to saying that  $ A'e_1,\dots, A'e_k,
           Ae_{k+1},\dots,  Ae_n$ are linearly independent. If not,
           there exists a $v\in \ker A_0$ and $w\in (\ker A_0)^\perp$
           such that $A'v=Aw$. Using self adjointness, $\ml
           Aw,Aw\mr=\ml A'v,Aw\mr=\ml Av,A'w\mr=0$. Thus $Aw=0$ and
           hence  $A'v=0$, which contradicts non-degeneracy.
  In particular, $\det A_t=at^k+ o(t^{k+1})$ with $a$ nonzero.
             It follows that the matrix of minors of $A_t^*$ has the form
$$M=\left(
              \begin{array}{cc}
                t^{k-1}\overline{X} & t^{k-1}\overline{Y} \\
                t^k\overline{Z} &t^k \overline{W} \\
              \end{array}
            \right) +o(t^{k}) \text{ where } \overline{N}=\left(
  \begin{array}{cc}
    \overline{X} & \overline{Y}  \\
    \overline{Z} & \overline{W} \\
  \end{array}
\right)
           $$
 is the matrix of minors of $N^T$, and hence  non-singular. Thus
           $$
            {A_t^{-*}}=\frac{1}{\det A^*} M^T= \frac{1}{\det A}\left(
              \begin{array}{cc}
                t^{k-1}\overline{X}^T & t^{k}\overline{Z}^T \\
                t^{k-1}\overline{Y}^T &t^k \overline{W}^T \\
              \end{array}
            \right)+o(1).
            $$
Hence ${A_t^{-*}}v$ is smooth, and non-zero,  if $v$ is orthogonal
to $\ker A_0$. If $v\in E_{t_0}$ is not orthogonal to $\ker A_0$, we
have $\lim_{t\to 0}||{A_t^{-*}}v||=\infty$ since $\overline{R}$ is
non-singular. Hence $g_v(0)=0$ which finishes (b) as well.
 \end{proof}

 \begin{rem*}  The function $g_v$ provides a lower bound for the norm of the
corresponding Jacobi field, i.e.
$$
g_v\leq ||A_t v||
$$
since
$$\langle v,v\rangle=\langle A^{-1}A v,v\rangle=\langle Av,A^{-*}v\rangle\leq ||Av|| \cdot ||{A}^{-*} v||.$$

 \end{rem*}

\bigskip

\noindent Our main tool is the following differential equation for
$g_v(t)$:
\begin{prop}\label{relation} Let $A$ be a Lagrange tensor and $S=A' A^{-1}$. Then at
regular points we have
\begin{equation}
g_v''+r g_v=0
\end{equation}
where
$$r=  \langle R z , z\rangle + 3  (\; ||S z||^2  -\langle S z,z \rangle^2\; )
 \quad \text{and}\quad z=\frac{ {A_t^{-*}} v}{||{A_t^{-*}}v||  }\; . $$
\end{prop}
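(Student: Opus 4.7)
My plan is to derive the equation by first computing a closed-form expression for the covariant derivative of the vector field $Z_t := A_t^{-*}v$, and then passing to its norm and applying the Riccati equation.

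The first step, and the heart of the argument, is to show that at regular points
\[
Z' = -S\,Z.
\]
To see this, differentiate the defining identity $A^*_t Z_t = v$ to obtain $(A^*)'Z + A^*Z' = 0$, hence $Z' = -A^{-*}(A')^*Z$. Since $A$ is Lagrange we have $A' = SA$ with $S$ self adjoint, so $(A')^* = (SA)^* = A^* S$, and therefore
\[
Z' \;=\; -A^{-*}A^*\,S\,Z \;=\; -S\,Z.
\]
This is the only conceptually delicate step; everything else is direct differentiation.

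Next set $u = \|Z\|$ so that $g_v = \|v\|^2/u$, and compute the derivatives of $u$ in terms of the unit vector $z = Z/u$. From $u^2 = \langle Z, Z\rangle$ we get $u\,u' = \langle Z', Z\rangle = -\langle SZ, Z\rangle$, so $u'/u = -\langle Sz,z\rangle$. Differentiating once more, $u u'' + (u')^2 = \langle Z'',Z\rangle + \|Z'\|^2$, where $Z'' = (-SZ)' = -S'Z + S^2 Z$. The Riccati equation \eqref{Riccati} gives $-S' = S^2 + R$, so
\[
Z'' \;=\; 2S^2Z + RZ, \qquad \langle Z'',Z\rangle \;=\; 2\|SZ\|^2 + \langle RZ,Z\rangle.
\]
Combined with $\|Z'\|^2 = \|SZ\|^2$, this yields
\[
\frac{u''}{u} \;=\; \langle Rz,z\rangle + 3\|Sz\|^2 - \langle Sz,z\rangle^2.
\]

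Finally, from $g_v = \|v\|^2/u$ one obtains $g_v''/g_v = -u''/u + 2(u'/u)^2$. Substituting the two expressions just computed gives
\[
\frac{g_v''}{g_v} \;=\; -\langle Rz,z\rangle - 3\|Sz\|^2 + \langle Sz,z\rangle^2 + 2\langle Sz,z\rangle^2 \;=\; -\langle Rz,z\rangle - 3\bigl(\|Sz\|^2 - \langle Sz,z\rangle^2\bigr),
\]
which is precisely $-r$, proving $g_v'' + r g_v = 0$. The main subtlety is the identity $Z' = -SZ$; once that is in hand, the rest reduces to the standard calculus for the norm of a vector field together with a single application of the Riccati equation.
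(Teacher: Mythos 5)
Your proof is correct and follows essentially the same route as the paper: the key identity $(A_t^{-*})' = -S A_t^{-*}$ (equivalently $Z' = -SZ$) derived from $A'=SA$ and $S^*=S$, followed by the Riccati substitution $S' = -S^2 - R$. The only cosmetic difference is that you track $u = \|Z\|$ while the paper works with $f_v = \|Z\|^2 = 1/g_v^2$ and then converts to $g_v$ at the end; the computations are the same calculus rearranged.
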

\begin{proof}
To simplify the notation we assume $||v||=1$ (which does not effect
the differential equation) and set
$$f_v=\frac 1{g_v^2}=||{A_t^{-*}} v||^2.$$
First observe that
$$
 ({A_t^{-*}})'= -{A_t^{-*}} (A_t^*)' {A_t^{-*}}=-(A_t'A_t^{-1})^* {A_t^{-*}}=-S^*{A_t^{-*}}=-S{A_t^{-*}}
$$
and hence
$$f_v'=-2 \langle S {A_t^{-*}} v,{A_t^{-*}} v\rangle.$$
Furthermore
\begin{eqnarray*}
f_v''&=&-2 \langle S' {A_t^{-*}} v,{A_t^{-*}} v\rangle+4 \langle S^2 {A_t^{-*}} v,{A_t^{-*}} v\rangle=\\
&=&-2 \langle (-S^2-R) {A_t^{-*}} v,{A_t^{-*}} v\rangle+4 \langle S {A_t^{-*}} v,S{A_t^{-*}} v\rangle=\\
&=&2 \langle R {A_t^{-*}} v,{A_t^{-*}} v\rangle+6 ||S{A_t^{-*}} v||^2.
\end{eqnarray*}
and thus
\begin{eqnarray*}
 g_v''&=&
 \left(\frac 34 f_v'^2-\frac 12 f_v'' f_v\right)f^{-5/2}\\
&=&\left(3\langle S {A_t^{-*}} v,{A_t^{-*}} v\rangle^2 -\langle R {A_t^{-*}} v,{A_t^{-*}} v\rangle
||{A_t^{-*}} v||^2
-3 ||S{A_t^{-*}} v||^2 ||{A_t^{-*}} v||^2\right)f^{-5/2}\\
&=&-r g_v.
\end{eqnarray*}
\end{proof}

\begin{rem*}
Notice that ${A_t^{-*}}$ itself satisfies the differential
equation
$$
({A_t^{-*}})''= (2S^2+R){A_t^{-*}}.
$$
\end{rem*}

\noindent \pref{relation} implies certain  concavity properties in
non-negative curvature.

\begin{cor}\label{concave} Let $A$ be a Lagrange tensor. If $R\ge 0$ (resp. $R>0$), then for
any $v\in E_{t_0}$,  $g_v$ is a concave (resp. strictly concave)
function on any interval where $g_v>0$.
\end{cor}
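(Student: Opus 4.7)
The plan is to combine the differential equation from \pref{relation} with the smoothness statement of \pref{smooth}. At any regular point in an interval where $g_v>0$, the previous proposition gives $g_v''=-r\,g_v$ with
$$
r=\ml Rz,z\mr+3\bigl(\|Sz\|^2-\ml Sz,z\mr^2\bigr),\qquad z=\frac{{A_t^{-*}}v}{\|{A_t^{-*}}v\|}.
$$
Since $\|z\|=1$, the Cauchy--Schwarz inequality gives $\ml Sz,z\mr^2\le \|Sz\|^2$, so the second summand in $r$ is always nonnegative. Hence the assumption $R\ge 0$ forces $r\ge 0$, so $g_v''\le 0$ at every regular point of the interval; under the stronger assumption $R>0$ one obtains the strict inequality $g_v''<0$ at each such point.

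To upgrade this pointwise information to concavity of $g_v$ on the whole interval, I need to cross the isolated singular points, where $S=A'A^{-1}$ blows up and the derivation in \pref{relation} breaks down. This is precisely what \pref{smooth} handles: on any interval $I$ on which $g_v>0$, part (b) of that proposition tells us that $v\perp\ker A_t$ for every $t\in I$, and part (a) then guarantees that ${A_t^{-*}}v$ extends smoothly and nowhere-vanishingly across each singular point in $I$. Therefore $g_v=\|v\|^2/\|{A_t^{-*}}v\|$ is smooth on all of $I$, and in particular $g_v''$ is continuous there. Since singular points are isolated, the regular points form a dense open subset of $I$, and the pointwise inequality $g_v''\le 0$ extends from this dense subset to all of $I$ by continuity, proving concavity.

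For the strict statement when $R>0$, I argue by contradiction: if $g_v$ were affine on some subinterval $J\subset I$, then $g_v''\equiv 0$ on $J$, contradicting $g_v''<0$ at the (dense set of) regular points of $J$. Hence $g_v$ is strictly concave on $I$. The only non-routine step is the smoothness across singular points, and this is already supplied by \pref{smooth}; everything else is an algebraic manipulation of $r$ plus a standard density argument.
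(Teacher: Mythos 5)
Your proof is correct and matches the paper's (implicit) approach: it applies the differential equation from \pref{relation}, notes that $R\ge 0$ together with Cauchy--Schwarz forces $r\ge 0$, and invokes \pref{smooth} to extend $g_v$ smoothly across the isolated singular points on any interval where $g_v>0$. The paper states the corollary as an immediate consequence of \pref{relation}; you have simply made explicit the density and smoothness details it leaves to the reader.
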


\begin{rem*} The
concavity of $g_v$ implies the convexity of $f_v=||{A_t^{-*}}
v||^2$ (but not conversely). Since $f_v=\ml (A^*A)^{-1}v,v\mr$, this
can also be interpreted as saying the operator $(A^*A)^{-1}$ is
convex. The zeros of $g_v$ correspond to vertical asymptotes of
$f_v$.
\end{rem*}

\begin{example} If $\dim M=2$, then $g_v(t)=||A_tv||$ and hence the concavity  of $g_v$ is indeed a generalization of the concavity of Jacobi
fields in dimension two.
\end{example}

\begin{example}
Let $M=S^3\subset \C^2$ with the standard metric. The restriction of
the action field of the Hopf action of $S^1$ to a geodesic is a
Jacobi field $J_1$ with unit length. Consider the geodesic
$c(t)=(\cos(t),\sin(t))$, then $J_1=i\,c(t)=(i\cos(t),i\sin(t))$.
Let $J_2=(0,i\sin(t))$ then $span\{J_1,J_2\}$ is a self-adjoint
family of Jacobi fields  $V$ along $c(t)$. The singular points along
$c(t)$ are $t=n \frac \pi 2, \, n\in \Z$, since $J_2=0$ for $t=n
\pi$ and $J_1-J_2=0$ for $t=(2n+1) \frac \pi 2$. Now $t_0=\frac \pi
4$ is a regular point and, if $v=J_1(t_0)$, one easily sees that
$g_v(t)=|\sin(2t)|\leq ||J_1(t)||=1$. Notice also that $g_{w}$ with
$w=J_2(t_0)$ is smooth across the singularity at $\frac{\pi}{2}$.
\end{example}

\begin{example} If $\sec_M\ge \delta$, then $g_v''+rg_v=0$ with $r\ge \delta$.
Thus Sturm comparison implies that $g_v\le f_\delta$ with
$f_\delta''+\delta f=0$ and $f_\delta(t_0)=g_v(t_0)=|J|(t_0)$,
$f_\delta'(t_0)=g_v'(t_0)=|J|'(t_0)$ (see \pref{higher} below). This
comparison holds up to the first point where $g_v$ vanishes.

In contrast, the usual Rauch comparison theorem implies that $|J|\le
f_\delta$, but only holds up to the first singularity of $A_t$, i.e.
there could be other Jacobi fields $A_tw$ which vanish before $|J|$.

For $g_v$ one obtains an upper bound on $[t_0,t_1]$ as long  $v$ is
orthogonal to the kernels of $A_t\, , t\in[t_0,t_1]$, or
equivalently $g_v>0$. Of course a zero of $g_v$ also corresponds to
a singularity of $A_t$. For example, if $\sec_M\ge 1$, this implies
that the index of the geodesic is at least $n-1$ after length $\pi$.

We remark that for an upper curvature bound $\sec_M\le \mu$, one can
analogously use the differential equation for $|J|$ in the
Introduction to get the usual lower bound on $|J|$, without having
to prove a Rauch comparison theorem.
\end{example}
\smallskip

It is useful to compare the higher derivatives of $g_v$ with those
of $||A_tv||$.

\begin{prop}\label{higher} Let $A$ be the Lagrange tensor
defined by a self adjoint family of Jacobi fields $V$ as in
\eqref{Jdef} with base point $t_0$ and $v\perp \ker A_{t_0}$. Then
 for $t=t_0$ we have:
$$g_v=||Av||\quad ,\quad g_v'=||Av||' \quad ,\quad g_v''=||Av||''-4
\left(||A'v||^2- \ml A'v,v\mr^2\right)\le ||Av||''. $$
  \end{prop}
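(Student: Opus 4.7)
By the scaling $g_{\lambda v}=\lambda g_v$ I may assume $\|v\|=1$, so that the claimed identities make sense as written. Write $J_t=A_tv$ and $Z_t=A_t^{-*}v$. By \pref{smooth}(a), $Z_t$ extends smoothly across $t_0$ since $v\perp\ker A_{t_0}$, and at regular points $A_t^*Z_t=v$, equivalently $\langle Z_t,A_tw\rangle=\langle v,w\rangle$ for all $w\in E_{t_0}$. Both identities persist at $t_0$ by continuity; specializing $w=v$ gives $\langle Z_t,J_t\rangle\equiv 1$, whence $g_v=1/\|Z\|\le\|J\|=\|Av\|$ by Cauchy--Schwarz, with equality at a point $t$ precisely when $Z_t$ is a positive multiple of $J_t$.

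The heart of the computation is locating $Z(t_0)$ and $Z'(t_0)$ via the block form from \lref{Jinitial}(c). In the orthogonal splitting $E_{t_0}=V_1\oplus V_2$ with $V_2=\ker A_{t_0}$, our hypothesis gives $v\in V_1$, so $J(t_0)=v$ and $J'(t_0)=Bv$. Taylor-expanding $A_t^*Z_t=v$ at $t_0$ in this block form: the zeroth-order equation yields $Z(t_0)|_{V_1}=v$; the first-order equation then gives $Z(t_0)|_{V_2}=0$ together with $Z'(t_0)|_{V_1}=-Bv$; and the second-order equation delivers $Z'(t_0)|_{V_2}=0$. Hence $Z(t_0)=v$ and $Z'(t_0)=-Bv$, so in particular $\|Z(t_0)\|=\|J(t_0)\|=1$ (giving $g_v(t_0)=\|Av\|(t_0)$) and $\|Z'(t_0)\|=\|J'(t_0)\|=\|Bv\|$. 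The equality $g_v'(t_0)=\|Av\|'(t_0)$ is then automatic, since the smooth nonnegative function $\|Av\|-g_v$ attains a minimum at $t_0$.

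For the second derivative I would differentiate $\langle Z,J\rangle\equiv 1$ twice and insert $J''(t_0)=-R\,v$ together with the data above, which yields $\langle Z''(t_0),v\rangle=2\|Bv\|^2+\langle Rv,v\rangle$. Expanding $(\|Z\|^2)''$ and $(\|J\|^2)''$ at $t_0$ and converting to second derivatives of $\|Z\|^{-1}$ and $\|J\|$ via the chain rule is then routine, at the end of which the curvature terms cancel and leave
$$\|Av\|''(t_0)-g_v''(t_0)=4\bigl(\|Bv\|^2-\langle Bv,v\rangle^2\bigr)=4\bigl(\|A'v\|^2-\langle A'v,v\rangle^2\bigr),$$
with the final inequality following from Cauchy--Schwarz applied to the unit vector $v$ and $A'v$.

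\textbf{Main obstacle.} The delicate point is pinning down $Z(t_0)$ and $Z'(t_0)$ when $t_0$ is singular, so that $A_{t_0}^*$ fails to be invertible and no single algebraic equation isolates $Z(t_0)$. This is resolved by peeling off the $V_1$- and $V_2$-components from successive derivatives of $A_t^*Z_t=v$ using the very specific block form of \lref{Jinitial}(c); once $Z(t_0)=J(t_0)$ and $Z'(t_0)=-J'(t_0)$ are in hand, the rest is bookkeeping with the identity $\langle Z,J\rangle\equiv 1$.
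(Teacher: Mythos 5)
Your proof is correct, and while the overall strategy mirrors the paper's (first pin down $Z(t_0)$ and $Z'(t_0)$, then compute derivatives of $g_v$), the intermediate techniques are genuinely different and, in places, cleaner. Where the paper extracts $\lim_{t\to t_0} A_t^{-*}v = v$ by returning to the explicit matrix-minor expansion from the proof of \pref{smooth}, and then obtains $\lim_{t\to t_0}(A_t^{-*}v)'=-A_{t_0}'v$ by a sequence of self-adjointness pairing identities with test vectors in $V_1$ and $V_2$, you Taylor-expand the single identity $A_t^*Z_t\equiv v$ to orders $0,1,2$ and read off the $V_1$- and $V_2$-components term by term using the block form of $A_{t_0}$ and $A_{t_0}'$ from \lref{Jinitial}(c) together with $A_{t_0}''=-R_{t_0}A_{t_0}$. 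This is more systematic and avoids reopening the matrix-inverse expansion. Your second shortcut, deducing $g_v'(t_0)=\|Av\|'(t_0)$ from the fact that the smooth nonnegative function $\|Av\|-g_v$ attains its minimum value $0$ at $t_0$, is also not in the paper (which computes the limit of $-\tfrac12 f_v'/f_v^{3/2}$ directly) and is a nice economy. For $g_v''$ both arguments amount to the same bookkeeping, you via the chain rule applied to $\|Z\|^{-1}$ and $\|J\|$, the paper via taking the limit in the differential equation of \pref{relation}; the resulting formula and the final Cauchy--Schwarz inequality agree. In short: same skeleton, but your route to $Z(t_0)$ and $Z'(t_0)$ is more direct, and the minimum argument for the first derivative is a pleasant simplification.
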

  \begin{proof}
The assumption  $v\perp \ker A_{t_0}$ implies that $g_v$ is smooth
and non-zero at $t_0$. But to determine its value and derivatives at
$t=t_0$ we need to carefully take the limit as $t\to t_0$.

Since the equations are scale invariant, we can assume $||v||=1$.
Recall that at the base point we have ${A_{t_0}}_{| V_1}=\Id$,
${A_{t_0}}_{| V_2}=0$ and $V_1\perp V_2$.
 Thus $v\in V_1$ and hence
  $A_{t_0}v=v$, as well as $A_{t_0}^*v=v$.

  To compute the derivatives of $g$,  recall that \pref{smooth} also implies that
  $A_t^{-*} v$ is smooth at $t=t_0$. We begin by showing that:
  \begin{equation}\label{limit}
  \lim_{t\to t_0}A_{t}^{-*} v=v,\quad  \lim_{t\to t_0}(A_t^{-*} v)'=-A_{t_0}'v\ .
  \end{equation}
  For the first claim, observe that \lref{Jinitial} implies that $A_{t_0}'w=w$ if $w\in V_2$. Thus, in the language of the proof of \pref{smooth}, it
  follows that $N=\Id$ and hence $\overline{N}=\Id$ as well. Furthermore, $\det A_t=t^k+ o(t^{k+1})$ and thus the formula for the inverse implies
  that
   $\lim_{t\to t_0}A_t^{-*} v=v$.

  For the second claim we first observe that $ \lim_{t\to t_0}(A_t^{-*} v)'\in V_1
  $ since for $w\in V_2$ we have that $A_t'w$ and $w$ have the
  same limit and
  $$
\ml w, \lim_{t\to t_0} A_t^{-*} v\mr= \lim_{t\to t_0}\ml A'_tw,A_t^{-*} v\mr=-\lim_{t\to t_0}\ml A_{t}w,  (A_t^{-*} v)'\mr=
-\ml A_{t_0}w, \lim_{t\to t_0} (A_t^{-*} v)'\mr=0
  $$
  where the second equality follows  by differentiating $
\langle w,v\rangle= \langle  A_t^{-1} A_tw, v\rangle=
 \langle  A_tw, A_t^{-*} v\rangle
$. Now, if $w\in V_1$ we have
$$
 \lim_{t\to t_0} \langle (A_t^{-*}v)' ,A_tw\rangle=- \lim_{t\to t_0} \langle A_t^{-*}v ,A_t'w\rangle
 =-\lim_{t\to t_0} \langle A_tv ,A_t'w\rangle=-\lim_{t\to t_0} \langle A_t'v ,A_t w\rangle
$$
where we have used the fact that $ A_t^{-*}v$ and $A_tv$ have the
  same limit.
This implies the second part of \eqref{limit} since  $A_{t_0}w=w$.

We now apply \eqref{limit} to $g$. First, note that
  $g_v(t_0)=1=||A_{t_0}v||$.
 For the derivative, using $f_v(t)=||A_t^{-*} v||^2$, we see
 that
$$
 g_v'(t_0)=-\frac 12 \lim_{t\to t_0}
  \frac {f_v'(t)}{f_v(t)^{\frac 32}}=
-\frac{\lim_{t\to t_0} \langle (A_t^{-*} v)',A_t^{-*} v\rangle}
{ \lim_{t\to t_0}||{A_t^{-*}}v||^3}=
-\lim_{t\to t_0} \langle (A_t^{-*} v)',A_t^{-*} v\rangle
$$
and thus
$$
g_v'(t_0)=\ml A'v,v\mr=\ml A'v,Av\mr=||Av||'_{t_0}.
$$

\noindent For the second derivative, we use the differential
equation from \pref{relation} for $g_v$:
$$g_v''(t_0)=-\lim_{t\to t_0} r g_v=-\lim_{t\to t_0} \left\{3 (||Sz||^2- \ml Sz,z\mr^2)-\ml R z,z\mr\right\}$$
where $z= A_t^{-*} v/||A_t^{-*} v|| $. From the proof of
\pref{relation}, recall that at regular points we have $SA_t^{-*}
v=-(A_t^{-*} v)'$ and hence \eqref{limit} implies that $\lim_{t\to
t_0} Sz=A'_{t_0}v$. Thus

$$g_v''(t_0)=-3 (||A'v||^2- \ml A'v,v\mr^2)-\ml R v,v\mr$$
and since
$$
||Av||''=\frac{-\langle R Av,Av\rangle ||Av||^2+||A'v||^2 ||Av||^2- \langle A'v,Av\rangle^2 }{||Av||^3}
$$ we have
$$g_v''(t_0)=||Av||''-4 (||A'v||^2- \ml A'v,v\mr^2).$$
\end{proof}

\smallskip

\bigskip

\bigskip

\section{Concavity of Volumes}

\bigskip

We  construct a collection of concave functions which contain $g_v$
as a special case. For this we fix a $p$-dimensional subspace
$W\subset E_{t_0}$ and choose an orthonormal basis $e_1,\dots,e_p$
of $W$. Define
\begin{equation}\label{Mmatrix}
M\colon W\to W\quad \text{ with }\quad \ml M_te_i,e_j\mr=\ml A_t^{-*}e_i,A_t^{-*}e_j\mr
=\ml (A^*A)^{-1}e_i,e_j\mr,\quad 1\le i,j\le p\; .
\end{equation}

Thus $M$ represents the upper $p\times p$ block of the matrix $
(A^*A)^{-1}$. Furthermore, we decompose  $S=A'A^{-1}$, where we have
set $W_t:=A_t^{-*}W $, as
$$
S_1\colon W_t\to W_t,\quad S_2\colon W_t\to W_t^\perp\quad \text{ with }\quad
Sw=S_1w+S_2w \ \text{ for all } \ w\in W_t.
$$
 Notice that $S_1$ is again a symmetric endomorphism. Notice also
 that since $(A^*A)^{-1}$ is positive definite at regular points, so is the upper
 $p\times p$ block by Sylvester's theorem and thus $\det M_t> 0$.

\begin{prop}\label{volume}
Let $A$ be a Lagrange tensor and $W\subset E_{t_0}$  a
$p$-dimensional subspace. Then at  regular points  the function
$$
g_W(t)= (\det M_t)^{-1/2p}
$$
 satisfies the differential equation
$$
p\, \frac{g''}{g}=\frac1p(\tr S_1)^2-\tr(S_1^2)-3\tr(S_2^TS_2)-\sum_{i=1}^{i=p}  \ml \, Rw_i,w_i\mr
$$
where $w_i$ is an orthonormal basis of $W_t$.
\end{prop}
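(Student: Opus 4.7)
The plan is to derive the differential equation by direct computation, following the same strategy as in the proof of Proposition~\ref{relation} but adapted to the $p$-dimensional setting. Set $\phi(t)=\log\det M_t$, so that $g_W = e^{-\phi/(2p)}$ and
\begin{equation*}
\frac{g_W''}{g_W} = \Bigl(\frac{g_W'}{g_W}\Bigr)^2 + (\log g_W)'' = \frac{(\phi')^2}{4p^2} - \frac{\phi''}{2p}.
\end{equation*}
By Jacobi's formula, $\phi' = \tr(M^{-1}M')$ and $\phi'' = \tr(M^{-1}M'') - \tr((M^{-1}M')^2)$, so the problem reduces to computing these three traces.

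Next, I would compute $M'$ and $M''$ using the vector fields $u_i(t) = A_t^{-*}e_i$, so that $M_{ij} = \langle u_i, u_j\rangle$. Since $(A_t^{-*})' = -S\,A_t^{-*}$ (from the proof of Proposition~\ref{relation}) and $(A_t^{-*})'' = (2S^2+R)A_t^{-*}$ (by the Remark after it), and since only the $W_t$-component of $Su_i$ contributes to inner products with elements of $W_t$,
\begin{align*}
M_{ij}' &= -2\langle S_1 u_i, u_j\rangle,\\
M_{ij}'' &= 2\langle Ru_i, u_j\rangle + 6\langle S_1 u_i, S_1 u_j\rangle + 6\langle S_2 u_i, S_2 u_j\rangle,
\end{align*}
where I have used that $S$ is symmetric and that $\im S_1\perp\im S_2$ to split $\langle Su_i, Su_j\rangle$.

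To convert these matrix entries into the intrinsic traces on $W_t$, I would introduce the invertible map $L\colon W\to W_t$ with $Le_i = u_i$ and fix an orthonormal basis $\{w_k\}$ of $W_t$. Writing $u_i = \sum_k L_{ki} w_k$ gives $M = L^T L$. The key observation is that for any endomorphism (or bilinear form) $[X]$ on $W_t$ with matrix $[X]_{kl}$ in the basis $\{w_k\}$, one has $\tr\bigl(M^{-1}(L^T[X]L)\bigr) = \tr(L^{-1}[X]L) = \tr[X]$. Applying this to $[S_1]$, $[S_1]^2$, to $[S_2]^T[S_2]$ (treating $S_2$ via the matrix $[S_2]_{ak} = \langle S_2 w_k, v_a\rangle$ in an orthonormal basis $\{v_a\}$ of $W_t^\perp$), and to $[R|_{W_t}]_{kl} = \langle Rw_k, w_l\rangle$, yields
\begin{align*}
\tr(M^{-1}M') &= -2\tr S_1,\\
\tr((M^{-1}M')^2) &= 4\tr(S_1^2),\\
\tr(M^{-1}M'') &= 2\textstyle\sum_i\langle Rw_i, w_i\rangle + 6\tr(S_1^2) + 6\tr(S_2^T S_2).
\end{align*}
Substituting these into the expression for $p\,g_W''/g_W$ and simplifying produces the claimed identity.

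The main obstacle is the bookkeeping in the change-of-basis step: one must carefully distinguish between $M$ as an operator on the fixed subspace $W\subset E_{t_0}$ and the operators $S_1$, $S_2$, $R|_{W_t}$ defined on the moving subspace $W_t$, and verify that the traces transport correctly under conjugation by $L$. Once this linear algebra is in place, the Riccati equation $S' = -S^2 - R$ (which enters $M''$ through $u_i'' = (2S^2+R)u_i$) produces exactly the curvature term together with the coefficients $1/p$, $-1$, $-3$ in the final formula.
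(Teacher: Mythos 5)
Your proposal is correct and follows essentially the same route as the paper: Jacobi's formula for $(\log\det M)'$ and $(\log\det M)''$, the computation of $M'$ and $M''$ from $(A^{-*})' = -SA^{-*}$ and $(A^{-*})'' = (2S^2+R)A^{-*}$, and the orthogonal split $S = S_1 + S_2$ along $W_t\oplus W_t^\perp$. The only difference is in the linear-algebra step converting the traces over $W$ into traces over $W_t$: the paper fixes a regular $t^*$ and chooses a basis $\{e_i\}$ of $W$ diagonalizing $M_{t^*}$, so that the $Z_i = A^{-*}e_i/\|A^{-*}e_i\|$ are orthonormal, whereas you factor $M = L^TL$ with $L\colon W\to W_t$, $Le_i = u_i$, and use the conjugation invariance $\tr\bigl(M^{-1}L^T[X]L\bigr) = \tr[X]$; the latter is slightly cleaner since it avoids a $t^*$-dependent choice of basis, but the arithmetic is identical.
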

\begin{proof}

 As in the proof of \pref{relation}, one easily sees
that
\begin{equation}\label{Mderivative}
 \ml M'e_i,e_j\mr=-2\,\ml SA^{-*}e_i,A^{-*}e_j\mr
\quad , \quad \ml M''e_i,e_j\mr=\ml \, (6S^2+2R)A^{-*}e_i,A^{-*}e_j\mr.
\end{equation}
For convenience, set $f=\det M_t$. Differentiating we obtain:
\begin{equation}
f'=(\det M)'=\det M\tr(M^{-1}M'),\quad \text{ or }\quad \frac{f'}{f}=\tr(M^{-1}M')
\end{equation}
and hence
\begin{eqnarray*} \frac{f''}{f}&=&\,\left[\tr(M^{-1}\,M')\right]^2+\,
\tr((M^{-1})'\,M')+\tr (M^{-1}\, M'')\\
&=&\left[\tr(M^{-1}\,M')\right]^2+\,
\tr(-M^{-1}\,M'\,M^{-1}\,M')+\tr (M^{-1}\, M'')\\
&=&\left[\tr(M^{-1}\,M')\right]^2\,
-\tr(\left[M^{-1}\,M'\right]^2)+\tr (M^{-1}\, M'').
\end{eqnarray*}

We now examine each term separately. For this, fix a regular point
$t^*$ and choose an orthonormal basis $e_1,\dots,e_p$ of $W$ which
diagonalizes the symmetric matrix $M_{t^*}$, i.e. $\ml
A^{-*}_{t^*}e_i,A^{-*}_{t^*}e_j\mr=||
A^{-*}_{t^*}e_i||^2\,\delta_{i,j}. $ Thus $
Z_i:=\frac{A_{t^*}^{-*}e_i}{||A_{t^*}^{-*}e_i||}$ is an orthonormal
basis of $W_{t^*}$.

Dropping the index $t^*$ from now on, the entries of $M^{-1}M'$ are
$\frac{-2}{||A^{-*}e_i||^2}\ml SA^{-*}e_i,A^{-*}e_j\mr$ and thus
$$
\tr(M^{-1}\,M')=-2\sum_{i=1}^{i=p}\frac{1}{||A^{-*}e_i||^2}\ml SA^{-*}e_i,A^{-*}e_i\mr=
-2\sum_{i=1}^{i=p}\ml SZ_i,Z_i\mr=-2\tr S_1.
$$
For a general matrix $B=(b_{ij})$ we have $\tr
B^2=\sum_{i,j}b_{ij}b_{ji}$ and hence
\begin{eqnarray*}
\tr(\left[M^{-1}\,M'\right]^2)&=&4\sum_{i,j}
\frac{ \ml SA^{-*}e_i,A^{-*}e_j\mr\ml SA^{-*}e_j,A^{-*}e_i\mr }{ ||A^{-*}e_i||^2||A^{-*}e_j||^2 }\\
&=&4\sum_{i,j}\ml SZ_i,Z_j\mr^2=4\sum_{i,j}\ml S_1Z_i,Z_j\mr^2=4\tr (S_1^2).
\end{eqnarray*}
 Finally

\begin{eqnarray*}  \tr(M^{-1}M'')&=&  \sum_i \frac{1}{||A^{-*}e_i||^2}\ml \,
(6S^2+2R)A^{-*}e_i,A^{-*}e_i\mr\\
&=&6\sum_i \ml \, S^2Z_i,Z_i\mr+2\sum_i  \ml \, RZ_i,Z_i\mr
=6\sum_i \ml \, SZ_i,SZ_i\mr+2\sum_i  \ml \, RZ_i,Z_i\mr\\
&=&6\sum_i \ml \, S_1Z_i,S_1Z_i\mr+6\sum_i \ml \, S_2Z_i,S_2Z_i\mr+2\sum_i  \ml \, RZ_i,Z_i\mr\\
&=& 6\tr (S_1^2) +6 \tr (S_2^TS_2)+2\sum_i  \ml \, RZ_i,Z_i\mr.
\end{eqnarray*}
Altogether
$$
\frac{f'}{f}=-2\tr S_1\quad ,\quad \frac{f''}{f}=4(\tr S_1)^2+2\tr(S_1^2)+6\tr(S_2^TS_2)+2\sum_i  \ml \, RZ_i,Z_i\mr.
$$
For the function $g=f^{-1/2p}$ we have
$$
2p\, \frac{g''}{g}=-\frac{f''}{f}+\frac{2p+1}{2p}\left(\frac{f'}{f}\right)^2
=\frac{2}{p}(\tr S_1)^2-2\tr(S_1)^2-6\tr(S_2^TS_2)-2\sum_i  \ml \, RZ_i,Z_i\mr
$$
which proves our claim. \end{proof}

\begin{rem*}
If $W$ is one dimensional, clearly $g_W=g_v$ for $v$ a unit vector
in $W$. If $W=E_{t_0}$, we have $\det M=\det (A^*A)^{-1}=1/(\det
A)^2$ and thus $g_W=(\det A)^{1/n}$. The differential equation in
this case reduces to $\,n\, g''/g=\frac1n(\tr
S)^2-\tr(S^2)-\Ric(\dot{c},\dot{c})$ giving rise to the well known
concavity of the volume in positive Ricci curvature. Notice also
that the concavity of $g_W$ already holds under the assumption that
the curvature is $p\; $-positive, i.e. the sum of the  $p$ smallest
eigenvalues of $R$ are positive.
\end{rem*}

{\it Proof of Theorem B}\,:\, We first prove part (b). If $W\perp
\ker A_{t^*}$, then \pref{smooth} implies that $A^{-*}v$ is smooth
at $t^*$ for any $v\in W$, and hence $M_{t}$ is smooth at $t^*$ as
well. The proof of \pref{smooth} also shows that if $e_1,\dots,e_p$
is a basis of $W$, then $A^{-*}e_1,\dots,A^{-*}e_p$ are linearly
independent at $t=t^*$ and hence $g_W(t^*)>0$. It also follows that
if $W$ is not orthogonal to $\ker A_{t^*}$, then $g_W(t^*)=0$.

To prove part (a), first recall that $(x_1+\dots +x_p)^2\le
p(x_1^2+\dots+x_p^2)$ with equality if and only if all $x_i$ are
equal to each other. Thus $(\tr S_1)^2-p\,\tr(S_1^2)\le 0$ with
equality iff $S_1=\lambda\Id$. Furthermore, if the sum of the $p$
smallest eigenvalues of $R$ are non-negative, one easily sees that
$\sum_{i=1}^{i=p} \ml \, Rw_i,w_i\mr\ge 0$ if $w_1,\dots,w_p$ is an
orthonormal basis of any $p$ dimensional subspace of $E_{t_o}$.
Finally, $S_2^TS_2$ is clearly positive semi-definite. Altogether,
\pref{volume} implies that $g_W$ is concave.

If $g$ is constant, the differential equation implies that for any
$v\in W_t$ we have $S_1v=\lambda v$ for some function $\lambda(t)$.
Furthermore, $0=\ml S_2^TS_2v,v\mr=\ml S_2v,S_2v\mr$ and hence
$S_2v=0$. In other words, $Sv=\lambda v$ for all $v\in W_t$. But if
$g$ is constant $f$ is constant as well and $f'=0$ implies that $\tr
S_1=0$ and hence $\lambda=0$. Thus $(A^{-*}v)'=-SA^{-*}v=0$, for all
$v\in W$, which implies that the function $g_{v}$  is constant, and
hence by \pref{rigidity} below, $A^{-*}v$ is a parallel Jacobi
field. This proves part (c). \qed

\bigskip

\bigskip

\section{Rigidity}

\bigskip

We now  use the results in Section 1  to prove  the existence of
parallel Jacobi fields in non-negative curvature, i.e. vectors $v\in
E_{t_0}$ with $A'_tv=0$.
 We allow endpoints and interior points of the geodesic to be singular.

\begin{prop}\label{rigidity} Let $A$ be a Lagrange tensor along
 the geodesic $c\colon[t_0,t_1]\to M$. If $R\ge 0$ and if there
exists a non-zero vector $v\in E_{t_0}$ such that
\begin{enumerate}
\item[(a)] $g_v'(t_0)=g_v'(t_1)=0$,
\item[(b)] $v$ is orthogonal to $\ker A_t$ for all $t_0\le t\le t_1$,
\end{enumerate}
then  $w=A^{-1}_t{A_t^{-*}}v\in E_{t_0}$ is constant and $A'_tw=0$
for all $t$. Thus ${A_t^{-*}}v=A_tw$ is a parallel Jacobi field.
\end{prop}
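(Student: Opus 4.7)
My plan is to use the hypotheses to force $g_v$ to be constant on $[t_0,t_1]$, extract rigidity from the differential equation of \pref{relation}, and then read off that $Z_t:=A_t^{-*}v$ must be a parallel Jacobi field.

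By hypothesis~(b) together with \pref{smooth}, the vector field $Z_t$ extends smoothly to all of $[t_0,t_1]$, and $g_v=\|v\|^2/\|Z_t\|$ is smooth and strictly positive throughout. \cref{concave} then gives that $g_v$ is concave on this interval, so $g_v'$ is non-increasing; combined with hypothesis~(a), $g_v'(t_0)=g_v'(t_1)=0$ pinches $g_v'\equiv 0$. Hence $g_v$ is constant on $[t_0,t_1]$, and equivalently $\|Z_t\|=\|v\|^2/g_v$ is a positive constant.

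Feeding $g_v''=0$ and $g_v>0$ into the differential equation of \pref{relation} yields, at every regular $t$,
$$
0=\langle Rz,z\rangle+3\bigl(\|Sz\|^2-\langle Sz,z\rangle^2\bigr), \qquad z=Z_t/\|Z_t\|.
$$
The first summand is non-negative because $R\ge 0$, and the second is non-negative by Cauchy--Schwarz with equality iff $Sz$ is a scalar multiple of $z$. Both must therefore vanish, so $\langle RZ_t,Z_t\rangle=0$ (whence $RZ_t=0$ by positive semidefiniteness) and $SZ_t=\lambda(t)Z_t$ for a scalar $\lambda$. The identity $(A_t^{-*})'=-SA_t^{-*}$ from the proof of \pref{relation} gives $Z_t'=-SZ_t=-\lambda Z_t$; constancy of $\|Z_t\|$ then forces $0=\tfrac12(\|Z_t\|^2)'=\langle Z_t',Z_t\rangle=-\lambda\|Z_t\|^2$, so $\lambda\equiv 0$. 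Hence $SZ_t=0$ and $Z_t'=0$ at every regular $t$, and by continuity $Z$ is parallel on all of $[t_0,t_1]$.

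To conclude, at any regular $t$ set $w(t):=A_t^{-1}Z_t\in E_{t_0}$, so $A_tw(t)=Z_t$. Differentiating and using $A_t'=SA_t$ gives $A_t'w(t)+A_tw'(t)=Z_t'=0$; since $A_t'w(t)=SA_tw(t)=SZ_t=0$, invertibility of $A_t$ at regular $t$ gives $w'(t)=0$. Because the singular points of $A$ are isolated, the locally constant function $t\mapsto w(t)$ has a single value $w\in E_{t_0}$. The Jacobi field $A_tw$ then agrees with $Z_t$ on a dense set, hence everywhere, and $A_t'w$ is smooth and vanishes on that dense set, so $A_t'w\equiv 0$ on $[t_0,t_1]$. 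This exhibits $A_t^{-*}v=A_tw$ as a parallel Jacobi field. The only mildly delicate point, which is also where one must pay attention, is that everything must pass across the (isolated) singular points; but smoothness of $Z$ guaranteed by hypothesis~(b) together with the density of regular points handles this routinely. The conceptual heart of the argument is the rigidity forced by concavity of $g_v$: two independent non-negative quantities — the curvature term and the Cauchy--Schwarz deficit — must simultaneously collapse.
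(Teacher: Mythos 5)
Your proof is correct and follows essentially the same route as the paper: condition (b) and Proposition~\ref{smooth} give smoothness and positivity of $g_v$, concavity plus the vanishing endpoint derivatives force $g_v\equiv\text{const}$, and the rigidity in the differential equation then yields $SZ_t=0$ and parallelism of $Z_t=A_t^{-*}v=A_tw$. The only small detour is that you argue from the $g_v''$ equation via Cauchy--Schwarz equality and a separate computation that $\lambda\equiv 0$, whereas the paper applies the cleaner identity $f_v''=2\langle R A^{-*}v,A^{-*}v\rangle+6\|S A^{-*}v\|^2$ for $f_v=\|A_t^{-*}v\|^2$, which gives $SA_t^{-*}v=0$ in one step.
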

\begin{proof}
By \pref{smooth}, assumption (b) implies that $g_v(t)$
 is smooth and positive for all $t_0\le t\le t_1$,  and by \cref{concave}, $g_v$ is concave and hence  constant.
 Thus $f_v=||{A_t^{-*}}v||$ is constant as well. At regular points
 we thus have
$$0=f_v''=2 \langle R {A}^{-*} v,{A}^{-*} v\rangle+6 ||S{A}^{-*} v||^2$$
and hence $S{A}^{-*} v=0$. Thus $({A}^{-*}
v)'=-S{A^{-*}}v=0$ and hence
$$(A^{-1}{A}^{-*} v)'=-A^{-1}A'A^{-1}{A}^{-*} v=-A^{-1}S{A}^{-*} v=0.$$
  Therefore, on any connected component of the regular points $A^{-1}{A}^{-*} v=w$ is constant and $A w={A}^{-*} v$ is
parallel. Since ${A}^{-*}v$ is continuous, $Aw$ is parallel for
all $t$.
\end{proof}

Here is one possibility to translate \pref{rigidity} into a
statement about Jacobi fields only, which is what we will use for
the obstruction in Section 4.

\begin{prop}
\label{rigidity2} Let $M^{n+1}$ be a manifold with non-negative
sectional curvature and $V$ a self adjoint family of Jacobi fields
along the geodesic $c : [t_0 , t_1 ] \to M$. Assume there exists
$X\in V$ such that
\begin{enumerate}
 \item[(a)] $||X||_{t}\neq 0$, $||X||'_{t}=0$ for $t=t_0$ and $t=t_1$,
 \item[(b)] If $Y\in V$ and $\ml X(t_1),Y(t_1)\mr=0$ then $\langle X(t_0),Y(t_0)\rangle=0$,
 \item[(c)] If $Y\in V$ and $Y(t)=0$ for some $t\in (t_0,t_1)$ then $\langle X(t_0),Y(t_0)\rangle
 =0$,
 \item[(d)] If $Y(t_0)=0$, then $\langle X'(t_0),Y'(t_0)\rangle=0$,
\end{enumerate}
Then $X$ is a parallel Jacobi field along $c$.
\end{prop}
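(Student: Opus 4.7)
The plan is to apply \pref{rigidity} to the Lagrange tensor $A$ associated to $V$ with base point $t_0$ as in \dref{Jdef}, at the unique $v_X\in E_{t_0}$ with $A_t v_X=X(t)$ (\lref{Jinitial}(b)). Hypothesis~(d) forces $v_X=X(t_0)\in V_1(t_0)=(\ker A_{t_0})^\perp$: writing $X'(t_0)=w_1+w_2$ in the orthogonal splitting $V_1(t_0)\oplus V_2(t_0)$, every $Y\in V$ with $Y(t_0)=0$ has $Y'(t_0)\in V_2(t_0)$, so (d) gives $\ml w_2,Y'(t_0)\mr=0$ and hence $w_2=0$; by \dref{Jdef} this means $v_X=X(t_0)$. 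Next, hypothesis~(b) of \pref{rigidity}, namely $v_X\perp\ker A_t$ for all $t\in[t_0,t_1]$, follows from (b) and (c): given $u\in\ker A_t$, the Jacobi field $Y=Au\in V$ vanishes at $t$, so (c) (when $t\in(t_0,t_1)$) or (b) (when $t=t_1$) yields $\ml v_X, A_{t_0}u\mr=\ml v_X, u_{V_1}\mr=0$, which upgrades to $\ml v_X, u\mr=0$ since $v_X\in V_1\perp V_2$.

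The heart of the argument is verifying the derivative conditions at the endpoints. At $t_0$, \pref{higher} gives $g_{v_X}(t_0)=||X(t_0)||$ and $g_{v_X}'(t_0)=||X||'(t_0)=0$ by (a). At $t_1$ I first claim $g_{v_X}(t_1)=||X(t_1)||$, equivalently (by the remark after \pref{smooth}) that $A^{-*}_{t_1}v_X$ is a multiple of $X(t_1)$. For $w\in E_{t_1}$ with $w\perp X(t_1)$, decompose $w=w_1+w_2\in V_1(t_1)\oplus V_2(t_1)$. For $w_1=Y(t_1)$ with $Y=Au\in V$, the identity $\ml A^{-*}_t v_X, A_t u\mr=\ml v_X, u\mr$ is constant in $t$ at regular points; passing to the limit at $t_1$ and applying (b) gives $\ml A^{-*}_{t_1}v_X, w_1\mr=\ml X(t_0),Y(t_0)\mr=0$. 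For $w_2=Y'(t_1)$ with $Y(t_1)=0$, differentiating the same constant identity and using that $A^{-*}_t v_X$ extends smoothly at $t_1$ (\pref{smooth}) yields $\ml A^{-*}_{t_1}v_X, w_2\mr=0$. Having $g_{v_X}(t_1)=||X(t_1)||$, the inequality $g_{v_X}\le||X||$ from the remark after \pref{smooth} says $||X||-g_{v_X}$ attains its minimum on $[t_0,t_1]$ at the right endpoint, so $g_{v_X}'(t_1)\ge||X||'(t_1)=0$; on the other hand, the concavity of $g_{v_X}$ from \cref{concave} together with $g_{v_X}'(t_0)=0$ forces $g_{v_X}'(t_1)\le 0$, whence $g_{v_X}'(t_1)=0$.

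\pref{rigidity} now produces a constant $w\in E_{t_0}$ such that $Aw=A^{-*}v_X$ is a parallel Jacobi field in $V$, with $A_{t_0}w=v_X=X(t_0)$. Set $Z:=X-Aw\in V$; then $Z(t_0)=0$, so hypothesis~(d) applied to $Z$ gives $\ml X'(t_0), Z'(t_0)\mr=0$, and since $Aw$ is parallel, $(Aw)'(t_0)=0$ and $Z'(t_0)=X'(t_0)$, forcing $X'(t_0)=0$. Since $X$ and $Aw$ are Jacobi fields with identical value and derivative at $t_0$, uniqueness gives $X=Aw$, which is parallel. The main obstacle is the middle paragraph — specifically proving $g_{v_X}(t_1)=||X(t_1)||$ at a potentially singular $t_1$, which requires decomposing $E_{t_1}=V_1(t_1)\oplus V_2(t_1)$ and handling each component via limits of constant-in-$t$ identities.
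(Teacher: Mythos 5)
Your proof is correct and follows the same overall route as the paper's: choose the Lagrange tensor with base point $t_0$ (where $A_{t_0}|_{V_1}=\Id$, $A_{t_0}|_{V_2}=0$), use (d) to place $v_X=X(t_0)\in V_1$, use (b) and (c) to get $v_X\perp\ker A_t$ throughout $[t_0,t_1]$, use \pref{higher} at $t_0$, and prove the key claim that $A^{-*}_{t_1}v_X$ is proportional to $X(t_1)$ by splitting a test vector into its $V_1(t_1)$ and $V_2(t_1)$ parts and exploiting the constancy of $\ml A_t^{-*}v_X,A_tu\mr$ — this is identical in substance to the paper's two-step verification of $\lim_{t\to t_1}A^{-*}_tv\in V_1(t_1)$ and $\lim_{t\to t_1}A^{-*}_tv\perp A_{t_1}w$ for $w$ with $\ml A_{t_1}v,A_{t_1}w\mr=0$.

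You do, however, deviate in two small but genuinely different steps. First, to get $g_{v_X}'(t_1)=0$ from the claim, the paper pushes the explicit limit formula into $f_v'(t_1)$ and shows it equals $-\lambda^2(\|A_tv\|^2)'|_{t_1}=0$; you instead observe that $\|X\|-g_{v_X}\ge 0$ on $[t_0,t_1]$ with equality at the right endpoint, forcing $g_{v_X}'(t_1)\ge\|X\|'(t_1)=0$, and then use concavity plus $g_{v_X}'(t_0)=0$ to get the opposite inequality. This trades a small computation for one extra invocation of concavity and is arguably cleaner. Second, at the end the paper uses $A_{t_0}'w=0$ and the block structure of $A_{t_0}'$ from \lref{Jinitial}(c) to conclude $w\in V_1$, hence $w=v_X$ and $A_tw=X$; you instead form $Z:=X-Aw\in V$ with $Z(t_0)=0$, apply (d) once more to get $\ml X'(t_0),Z'(t_0)\mr=0$, and use $(Aw)'(t_0)=0$ to deduce $X'(t_0)=0$ and then $X=Aw$ by uniqueness of Jacobi fields. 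Both routes are valid; yours makes an extra use of hypothesis (d), while the paper's exploits the algebraic form of $A_{t_0}'$ directly. Neither change affects the essential architecture of the argument.
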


\begin{proof}
We choose as a base point $t=t_0$. Then $V$ defines Lagrange tensor
$A_t$ as in \eqref{Jdef} with  ${A_{t_0}}_{| V_1}=\Id,\ {A_{t_0}}_{|
V_2}=0$ and $V_1\perp V_2$. By $(a)$ we have that $X(t_0)\neq 0$ and
we set $v:=X(t_0)\in V_1$. If $Y\in V$ and $Y(t_0)=0$ then
$Y'(t_0)\in V_2$ and $V_2$ is spanned by such vectors. Thus  $(d)$
implies $X'(t_0)\in V_1$ and hence by the definition \eqref{Jdef} we
have $X(t)=A_tv$, and $A_{t_0}v=v$.

We now want to show that the assumptions of \pref{rigidity} are
satisfied by $A_t$. We start with the second part.

Let $w\in \ker(A_t)$, i.e. $A_tw=0$ for $t\in (t_0,t_1)$. Set
$w=w_1+w_2$ with $w_i\in V_i(t_0)$ and hence $A_{t_0}w=w_1$.
Assumption (c) implies that $\ml A_{t_0}v,A_{t_0}w\mr=\langle
v,w_1\rangle=0$. Since $\langle v,V_2\rangle=0$ as well, we have
$\langle v,w\rangle =0$ and hence  $v\perp \ker A_t$. The same
argument shows that $v\perp \ker A_{t_1}$ by using  (b). If
$A_{t_0}w=0$, then $w\in V_2$ and hence $\ml v,w\mr=0$. Thus
\pref{smooth} implies that $ {A_t}^{-*}v$ and hence $g_v$ is smooth
for all $t\in[t_0,t_1]$.

We now show that $g_v'$ vanishes at the endpoints.  By
 \pref{higher}, $g_v'(t_0)= ||Av||'= ||X||'(t_0)=0  $.
 For $t=t_1$ the proof is similar to the proof of \pref{higher}. We first claim that
  \begin{equation}\label{limit2}
  \lim_{t\to t_1}A_t^{-*}v=\lambda A_{t_1} v \ \text{ for some } \lambda\in\R.
  \end{equation}

  To see this, we begin by showing that $ \lim_{t\to
  t_1}A_t^{-*}v\in V_1(t_1)$. But $V_1(t_1)\perp V_2(t_1)$
  and $V_2(t_1)$ is spanned by $A_{t_1}'w$ for some $w\in E_{t_0}$ with
  $A_{t_1}w=0$. By differentiating $\ml w,v\mr= \ml  A_t^{-*}v,A_tw   \mr$ we obtain
  $$\ml \lim_{t\to t_1} A_t^{-*}v, A_{t_1}'w \mr=\lim_{t\to t_1}\ml  A_t^{-*}v, A_t'w \mr=- \lim_{t\to t_1}\ml
  (A_t^{-*})'v,
  A_{t}w\mr
  =-\ml \lim_{t\to t_1} (A_t^{-*})'v , A_{t_1}w\mr
  =0.$$
   Next, we show that  $\ml \lim_{t\to
  t_1}A_t^{-*}v, A_{t_1}w\mr=0$ whenever $\ml
  A_{t_1}v,A_{t_1}w\mr=0$, which clearly implies \eqref{limit2} since $\Im A_{t_1}=V_1(t_1)$. To
  see this, we observe that (b)  implies  $0=\ml A_{t_0}w,A_{t_0}v\mr
 =\ml w_1,v\mr=\ml w_1+w_2,v\mr=\ml
  w,v\mr$ and hence
  $$\ml \lim_{t\to t_1} A_t^{-*}v, A_{t_1}w \mr= \lim_{t\to t_1}\ml
  A_t^{-*}v,
  A_{t}w\mr
  =\ml v,w\mr=0.$$

\smallskip

We now use \eqref{limit2} to show that $g_v'(t_1)=0$. Since
$g_v(t_1)\ne 0$ by (a), this is equivalent to $f_v'(t_1)=0$. By
\eqref{limit2}, $A_t^{-*}v$ and $\lambda A_{t} v $ have the same
limit and thus
\begin{align*}
 f_v'(t_1)=&
2\lim_{t\to t_1} \langle (A_t^{-*}v)',A_t^{-*}v\rangle
=
2\lim_{t\to t_1} \langle (A_t^{-*}v)',\lambda A_{t} v \rangle\\
&=
-2\lambda\lim_{t\to t_1}\langle A_t^{-*}v, A_t'v,\rangle
=-2\lambda^2\langle A_{t_1}v,A'_{t_1} v\rangle=  -\lambda^2 (||Av||^2)'_{t=t_1}.
\end{align*}
which is $0$ since $||Av||'(t_1)=||X||'(t_1)=0$.

 \pref{rigidity} now implies that ${A_t}^{-*}v=Aw$, for some $w\in E_{t_0}$, is a parallel Jacobi
field in $V$ and   ${A^{-*}_{t_0}}v=v=A_{t_0}w$ by \eqref{limit}.
Since $A'_{t_0}w=0$, \eqref{Jdef} implies that  $A_{t_0}w=w$, and
hence $w=v$ and thus $A_tw=A_tv=X$ is a parallel Jacobi field.
\end{proof}

\begin{rem*} (a)
Notice that the first three   conditions are necessary for $X$ to be
parallel, using, for (b) and (c) that in a self adjoint family of
 Jacobi fields, $\ml X,Y\mr'=\ml X ,Y'\mr=\ml X' ,Y \mr=0 $ for all $X,Y\in V$ with $X$ parallel.
 If there are no interior singular points, (b) is the only global condition and relates the Jacobi
   fields at $t_0$ and $t_1$.
Some global condition is clearly necessary since there are Jacobi
fields of constant length (restricted to a geodesic with no
singularities) which are not parallel.

Also notice that assumption (d) is necessary since on
$M=\Sph^1\times\Sph^2$ with the product metric we can take the
geodesic $c(t)=(1,\gamma(t))$ with $\gamma$ a great circle from
north pole to south pole. Then $V=\spam\{Z_1,Z_2 \}$ with
$Z_1=(1,0),\; Z_2=(0,Y(t))$ and $Y$ a Jacobi field vanishing at
north and south pole is a self adjoint family along $c$. Setting
$X=Z_1+Z_2$ one sees that all conditions in \pref{rigidity2}, except
for (d), are satisfied, but $X$ is not parallel.

\smallskip

(b) The fact that assumption (d) makes the Proposition asymmetric is
due to the fact that the definition of $g_v$ involves the choice of
a base point. This  turns out to be quite useful since for the
manifolds in Section 3, (d) is sometimes satisfied at one endpoint,
but not necessarily at the other. Of course, if $t_0$ is regular,
condition (d) is empty.
\end{rem*}

\begin{prop}\label{nonparallel}
Let $V$ and $X\in V$ satisfy the conditions in \pref{rigidity2} and
assume that $V$ is defined on a larger interval $[t_0,t_2]\supset
[t_0,t_1]$. If there exists a Jacobi field $Y\in V$ such that
$Y(t^*)=0$ for some $t^*\in (t_1,t_2]$ and $\langle
X(t_0),Y(t_0)\rangle\ne 0$, then $X$ is not parallel on $[t_0,t_2]$.
\end{prop}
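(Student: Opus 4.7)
The plan is to argue by contradiction, and the argument should be very short: it does not require invoking the full machinery of \pref{rigidity2} again, only the self-adjointness of $V$ and the definition of ``parallel''.

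Suppose for contradiction that $X$ is parallel on the enlarged interval $[t_0,t_2]$, i.e.\ $X'\equiv 0$ there. First I would observe that for any $Y\in V$, the function $\langle X,Y\rangle$ is constant on $[t_0,t_2]$. Indeed, by self-adjointness \eqref{selfad} we have $\langle X',Y\rangle=\langle X,Y'\rangle$, and since $X'=0$ this forces $\langle X,Y'\rangle=0$ as well. Therefore
\[
\frac{d}{dt}\langle X,Y\rangle \;=\; \langle X',Y\rangle+\langle X,Y'\rangle \;=\; 0
\]
on $[t_0,t_2]$.

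Now apply this to the specific Jacobi field $Y\in V$ given in the hypothesis, which vanishes at some $t^*\in(t_1,t_2]$. Evaluating the constant function $\langle X,Y\rangle$ at $t=t^*$ yields $\langle X(t^*),Y(t^*)\rangle=0$, and evaluating at $t=t_0$ yields $\langle X(t_0),Y(t_0)\rangle$, so these two values must coincide. But by assumption $\langle X(t_0),Y(t_0)\rangle\neq 0$, a contradiction. Hence $X$ cannot be parallel on the whole interval $[t_0,t_2]$.

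There is essentially no obstacle here: the only subtlety is making sure that the argument genuinely uses that $X$ would have to be parallel on the \emph{larger} interval (so that we can evaluate at the interior zero $t^*$ of $Y$), which is exactly what \pref{rigidity2} alone does not rule out. This is what makes the statement useful in the applications of Section~4: \pref{rigidity2} produces a Jacobi field that is parallel on $[t_0,t_1]$, while \pref{nonparallel} detects, by testing against a suitable $Y$ that vanishes later along the geodesic, that such a Jacobi field cannot remain parallel beyond $t_1$.
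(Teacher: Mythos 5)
Your argument is correct, and it is a genuinely simpler route than the one taken in the paper. The paper's proof stays inside the Lagrange tensor / $g_v$ framework: it first recalls from the proof of \pref{rigidity2} that $X(t)=A_tv$ with $v=X(t_0)$ (which really does use hypothesis (d) of \pref{rigidity2}), then observes that $\langle X(t_0),Y(t_0)\rangle\ne0$ forces $v\not\perp\ker A_{t^*}$ and hence $g_v(t^*)=0$ by \pref{smooth}(b), and finally shows that $A'_tv\equiv0$ would make $g_v\equiv\|A_tv\|=\|v\|>0$, a contradiction. Your proof bypasses all of that: from $X'\equiv 0$, self-adjointness \eqref{selfad} gives $\langle X,Y'\rangle=\langle X',Y\rangle=0$, hence $\langle X,Y\rangle$ is constant by metric compatibility, and evaluating at $t_0$ and $t^*$ produces the contradiction directly. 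This has two small advantages over the paper's version: it does not invoke \pref{smooth} or the definition of $g_v$, and it shows that the hypotheses inherited from \pref{rigidity2} are not actually needed for this proposition — only self-adjointness of $V$ is used. In fact the identity you exploit is stated explicitly in the Remark following \pref{rigidity2} (``$\ml X,Y\mr'=\ml X,Y'\mr=\ml X',Y\mr=0$ for $X$ parallel''), so the ingredient is already present in the paper; it is simply not applied to the proof of \pref{nonparallel} there.
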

\begin{proof}
Let $A_t$ be the Lagrange tensor associated to $V$ with base point
$t_0$. Recall that in the proof of \pref{rigidity2} we showed that
$X(t)=A_tv$ with $v=X(t_0)$. The assumption that $\langle
X(t_0),Y(t_0)\rangle\ne 0$ means that
 $v$ is not orthogonal to $\ker
A_{t^*}$ and hence $g_v(t^*)= 0$ by \pref{smooth} (b). Now assume
that $X$ is parallel on $[t_0,t_2]$. We claim that in that case
$g_v(t)$ would be constant on $[t_0,t_2]$, contradicting that fact
that $g_v(t^*)= 0$.

To see this, we show that $A'_tv=0$ with $A_{t_0}v=v$ implies
$g_v(t)=||A_tv||$. First observe that by self adjointness $\ml
A_tv,A_tw\mr'=\ml A'_tv,A_tw\mr+\ml A_tv,A'_tw\mr= 2\ml
A'_tv,A_tw\mr=0$. Thus if $\ml v,w\mr=0$, we have $\ml
A_tv,A_tw\mr=\ml A_{t_0}v,A_{t_0}w\mr=\ml v,w_1\mr=\ml v,w\mr=0$.
Furthermore, at regular points  $\ml v,w\mr=\ml A_t^{-*}v,A_tw\mr$
and hence $ A_t^{-*}v=\lambda A_tv$ for some function $\lambda$. But
then $\ml v,v\mr=\ml A_t^{-*}v,A_tv\mr=\lambda \ml
A_tv,A_tv\mr=\lambda\ml v,v\mr $ and thus $\lambda=1$, i.e.
$A_t^{-*}v=A_tv$ for all regular $t$. Thus  $g_v(t)=||v||^2/||
A_t^{-*}v||=||v||^2/|| A_tv||=||v||=||A_tv||$ for all regular $t$
and hence for all $t$.
\end{proof}

\bigskip

\section{Proof of  Theorem C and D}

\bigskip

We now use \pref{rigidity2} and \pref{nonparallel}  to prove Theorem
C and D.

\bigskip

A simply connected compact cohomogeneity one manifold is  the union of two
homogeneous disc bundles. Given  compact Lie groups $H,\, \Km ,\,
\Kp$ and $\G$ with inclusions $H\subset \Kpm \subset G$ satisfying
$\Kpm/H=\Sph^{\ell_\pm}$, the transitive action of $\Kpm$ on
  $\Sph^{\ell_\pm}$ extends to a linear action on the disc $\Disc^{{\ell_\pm}+1} $.
We can thus define
$M=G\times_{\Km}\Disc^{{\ell_-}+1}\cup G\times_{\Kp}\Disc^{{\ell_+}+1}$
glued along the boundary $ \partial (G\times_{\Kpm}\Disc^{\ell_\pm+1})=G\times_{\Kpm}\Kpm/H=G/H$
via the identity. $G$  acts on $M$ on each half via left action in the first component. This action has principal isotropy group $H$ and singular isotropy groups $\Kpm$.
 One possible description of a cohomogeneity one manifold is thus
simply in terms of the Lie groups $H\subset \{\Km , \Kp\}\subset G$
(see e.g. \cite{AA}).

\smallskip

The first family of cohomogeneity one manifolds we denote by  $P_{
(p_-,q_-),(p_+,q_+)}$
 and is given by the group diagram
$$
 H=\{\pm (1,1),\pm
(i,i),\pm (j,j),\pm (k,k)\} \subset\{ (e^{ip_-t},e^{iq_-t})\cdot H\;
   ,\;
   (e^{jp_+t},e^{jq_+t})\cdot H
   \}\subset\S^3\times\S^3.
  $$
  where $\gcd(p_-,q_-)=\gcd(p_+,q_+)=1$ and all $4$ integers are
   congruent to $1$ mod $4$.

  The second family  $Q_{
(p_-,q_-),(p_+,q_+)}$
  is given by the group diagram
$$
 H=  \{(\pm
1, \pm 1) , (\pm i , \pm i)\} \subset\{ (e^{ip_-t},e^{iq_-t})\cdot H\;
   ,\;
   (e^{jp_+t},e^{jq_+t})\cdot H
   \}\subset\S^3\times\S^3,
  $$
   where $\gcd(p_-,q_-)=\gcd(p_+,q_+)=1$, $q_+$ is even, and
   $p_-,q_-,p_+$ are
   congruent to $1$ mod $4$.

  \smallskip

  The candidates for positive curvature in \cite{GWZ} are $P_k=P_{(1,1),(1+2k,1-2k)}$,
  $Q_k=Q_{(1,1),(k,k+1)}$ with $k\ge 1$, and the exceptional manifold $R^7=Q_{(-3,1),(1,2)}$.

\bigskip

We now describe the  geometry of a general \coo\ action.
A $G$ invariant metric  is determined by its restriction
to  a geodesic $c$  normal to all orbits.
 At the  points $c(t)$ which are regular with respect to the action of $G$,
 the isotropy is constant  and we denote it by  $H$.
 In terms of a fixed biinvariant inner product $Q$ on the Lie algebra $\fg$
 and corresponding $Q$-orthogonal splitting $\fg=\fh\oplus\fh^\perp$ we
 identify, at regular points,   $\dot{c}^\perp\subset T_{c(t)}M$  with $\fh^\perp$
 via action fields: $X\in\fh^\perp\to
X^*(c(t))$. $H$ acts on $\fh^\perp$ via the adjoint representation
and a $G$ invariant metric on $G/H$ is described by an $\Ad(H)$
invariant inner product on $\fh^\perp$. Along $c$ the metric on $M$
is thus described by a collection of functions, which at the
endpoint must satisfy certain smoothness conditions.

\smallskip

 Since $G$ acts by isometries, $X^*,\ X\in\fg$, are Killing vector
 fields
  and hence the restriction to a geodesic is a Jacobi field.
  This gives rise to an $(n-1)$-dimensional  family of Jacobi
  fields along $c$ defined by $V:=\{X^*(c(t))\mid X\in\fh^\perp\}$.
  The self adjoint shape operator $S_t$ of the regular hypersurface orbit $G/H$ at $c(t)$ satisfies
$\nabla_{\dot c(t)}X^*=\nabla_{X^*}\dot c=S_t(X^*(c(t)))$, i.e. $X'=S_t(X),\ X\in\fh^\perp $.
Hence $V$ is self adjoint.
 \smallskip

 A singular point of $V$ is a point $c(t_0)$ such that there exists an $X^*\in V$ with $X^*(c(t_0))=0$,
  i.e. the isotropy group $G_{c(t_0)}$ satisfies $\dim G_{c(t_0)} > \dim H$ and is thus a singular
   isotropy group of the action. For simplicity set $K:=G_{c(t_0)}$
 and  define a  $Q$-orthogonal decompositions
 $$\fg=\fk\oplus \fm  , \quad  \fk=\fh\oplus \fp \ \text{ and thus }\ \fh^\perp=\fp \oplus \fm . $$
  Here $\fm$ can be viewed as the tangent space to the singular orbit $G/K$ at
  $c(t_0)$. The slice $D$, i.e. the vector space normal to $G/K$ at
  $c(t_0)$, can be identified with  $D :=\dot c(t_0)\oplus \fp$
  where $\fp\subset D$ via $X\in \fp\to (X^*)'(c(t_0))$. Notice that
  $X^*(c(t_0))=0$.
Since the slice is orthogonal to the orbit, we have   $ \ml  (X^*)'
, Y^*\mr_{c(t_0)}=0$ for $X\in \fp$ and $Y\in \fm$. $K$ acts via the
isotropy action $\Ad(K)_{| \fm}$ of $G/K$ on $\fm$ and via the slice
representation on $D$.
  The second fundamental form of the singular orbit can be viewed as a linear map
   ${B}\colon D\to S^2(\fm)$, $N\to \{ (X,Y)\to \ml S_N(X),Y\mr\}$. Since $K$ acts by isometries, ${B}$ is
 equivariant with respect to the slice representation of $K$ on $D$ and the action on $S^2(\fm)$ induced by its  isotropy representation
on $\fm$. An $\Ad(K)$ invariant irreducible splitting
$\fm=\fm_1\oplus\dots\oplus\fm_r$ induces a splitting of
 $S^2(\fm)$ into irreducible summands.  If for some $i$, the slice representation (which is irreducible)
  is not a subrepresentation of
   $S^2(\fm_i)$, this implies that $\ml {S_{\dot c(t_0)}}X,Y\mr=\ml X',Y\mr_{c(t_0)}=0$ for $X,Y\in
   \fm_i$. In particular, $||X||'_{c(t_0)}=0$.
This describes some of the smoothness conditions that must be
satisfied at the endpoints.

\bigskip

We now apply this  to the $P$ family and show:
\begin{prop}\label{Pfamily}
Let $M$ be one of the $7$-manifolds $P_{ (p_-,q_-),(p_+,q_+)}$ with
its cohomogeneity one action by $G=\S^3\times \S^3$. Assume that $M$
is not one of the candidates for positive curvature $P_k$ or $P_{
(1,q),(p,1)}$. Furthermore, let $c\colon (-\infty,\infty)\to M$ be a
geodesic orthogonal to all orbits. Then for any invariant metric
with non-negative curvature there exists a Jacobi field along $c$,
given by the restriction of a Killing vector field $X^*,\; X\in\fg$,
such that $X^*$ is parallel on some interval but not for all $t$.
 In particular,
the metric is not analytic.
\end{prop}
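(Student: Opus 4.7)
The plan is as follows. First apply \pref{rigidity2} to an arc $[0,L]$ of the normal geodesic $c$ between two consecutive singular orbits to produce a Killing field $X^*$ that is parallel there; then apply \pref{nonparallel} on the extended arc $[0,2L]$ to show parallelism does not persist globally. Non-analyticity is then immediate: if the metric were analytic, $X^*$ would be an analytic vector field along $c$, so its covariant derivative, which vanishes on the open interval $(0,L)$, would vanish everywhere by unique continuation, contradicting non-parallelism on $[0,2L]$.

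For the setup, let $c : \R \to M$ be a normal geodesic with consecutive singular orbits at $c(0)$ and $c(L)$, with isotropies $K^- = G_{c(0)}$ and $K^+ = G_{c(L)}$. Since $H$ is the diagonally embedded quaternion group, the $\Ad(H)$ representation on $\fg = \fh^\perp$ decomposes into three two-dimensional isotypic summands $V_{\chi_i}\oplus V_{\chi_j}\oplus V_{\chi_k}$, one for each nontrivial character of $H$. The Lie algebras $\fk^- \subset V_{\chi_i}$ and $\fk^+ \subset V_{\chi_j}$ lie in distinct summands, so $V_{\chi_k}$ is $\Ad(H)$-orthogonal to both. The central step is to find $X \in V_{\chi_k}$ whose action field $X^*$ satisfies the four hypotheses of \pref{rigidity2} on $[0,L]$.

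Conditions (b) and (c) of \pref{rigidity2} are straightforward: condition (c) is vacuous since $V$ has no interior singular points in $(0,L)$, and condition (b) holds because $V_{\chi_k}\perp \fk^\pm$ forces $\langle X^*(c(0)), Y^*(c(0))\rangle = \langle X^*(c(L)), Y^*(c(L))\rangle = 0$ whenever $Y\in \fk^\pm$. Conditions (a) and (d) encode smoothness of the metric at the singular orbits: decomposing $V_{\chi_k}$ under $\Ad(K^\pm)$ into irreducible rotation subsummands at rates $2p_\pm, 2q_\pm$ and comparing with the slice representations on $D^\pm$, the derivative $\|X^*\|'$ and the cross-terms $\ml X', Y'\mr$ for $Y$ vanishing at the endpoints are forced to vanish exactly when $X$ lies in a specific $\Ad(K^\pm)$-subrepresentation of $V_{\chi_k}$. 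The hypothesis that $M$ is neither $P_k$ nor of type $P_{(1,q),(p,1)}$ guarantees that these subrepresentations coming from $c(0)$ and $c(L)$ have nontrivial common intersection, producing a nonzero $X$ satisfying all four conditions simultaneously. By \pref{rigidity2}, the corresponding $X^*$ is parallel on $[0,L]$.

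To rule out parallelism on $[0,2L]$, compute the isotropy at $c(2L) = w_+ c(0)$, where $w_+ \in N_G(H)\cap K^+$ is a Weyl reflection of order $2$ modulo $H$. Using $p_+, q_+ \equiv 1 \bmod 4$ and the quaternion identity $\Ad(e^{j\pi/4})(i) = -k$, one finds $w_+\fk^- w_+^{-1} = \R(p_-k, q_-k) \subset V_{\chi_k}$, a definite line in the two-dimensional space $V_{\chi_k}$ distinct from the span of $X$. Picking $Y$ nonzero on this conjugate line gives $Y^*(c(2L)) = 0$ while $\langle X^*(c(0)), Y^*(c(0))\rangle \neq 0$, so \pref{nonparallel} implies $X^*$ is not parallel on $[0,2L]$, completing the argument via the analyticity observation of the first paragraph. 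The main obstacle is the explicit case analysis in the previous paragraph: tracking the $\Ad(K^\pm)$-irreducible decompositions of $V_{\chi_k}$ for every combination of integers $(p_-,q_-,p_+,q_+)\equiv(1,1,1,1)\bmod 4$ with $\gcd(p_\pm,q_\pm)=1$, determining the smoothness-admissible directions at each endpoint, and showing that the two admissible sets intersect nontrivially precisely when $M$ avoids the excluded families $P_k$ and $P_{(1,q),(p,1)}$.
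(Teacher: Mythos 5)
Your approach follows the paper's strategy for what the paper calls ``Claim 1''---apply \pref{rigidity2} on $[0,L]$ to an element of $V_{\chi_k}=\spam\{X_3,Y_3\}$ and then \pref{nonparallel} on $[0,2L]$---and in the cases where it applies, your argument is correct, including the details about $\Ad(H)$-isotypic orthogonality, the representation-theoretic computation of the admissible directions, and the use of $w_+$ to locate the kernel at $c(2L)$. However, the key assertion in your fourth paragraph---that excluding $P_k$ and $P_{(1,q),(p,1)}$ guarantees a nonzero common direction in $V_{\chi_k}$ whose derivative vanishes at both $c(0)$ and $c(L)$---is false, and this is a genuine gap.

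Concretely, consider $(p_-,q_-)=(5,9)$, $(p_+,q_+)=(1,1)$. This is neither $P_k$ (since $(5,9)$ is not of the form $(1+2k,1-2k)$ up to swap) nor of type $P_{(1,q),(p,1)}$ (neither coordinate of $(5,9)$ is $1$). At $c(0)$, both $\|X_3\|'(0)=0$ and $\|Y_3\|'(0)=0$ are forced since $p_-\neq 1$ and $q_-\neq 1$. But at $c(L)$, the slice weight is $4$ and since $p_+=q_+=1$, the weight-$4$ slice representation appears in both $S^2\overline W_1$ and $S^2\overline W_2$ (as well as in $\overline W_1\otimes\overline W_2$); hence equivariance does \emph{not} force $\|X_3\|'(L)=0$ or $\|Y_3\|'(L)=0$, and the ``admissible subspace'' at $c(L)$ may be trivial. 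Your intersection argument therefore produces nothing. This is exactly the family of cases where one slope equals $(1,1)$ and the other is generic, and the paper handles them by a separate argument (``Claim 2''): work on $[0,2L]$, where \emph{both} endpoints have slope $(p_-,q_-)$, so the conditions $p_-\neq 1$, $q_-\neq 1$, $p_-+q_-\neq\pm 2$ force the singular orbits at $c(0)$ and $c(2L)$ to be totally geodesic. One then chooses $X=aX_3+bY_3$ orthogonal at $t=0$ to the kernel vector $p_-X_3+q_-Y_3$ (which accounts for condition (b) of \pref{rigidity2} at both endpoints and for condition (c) at the interior singular point $t=L$), obtaining a parallel field on $[0,2L]$; non-parallelism must then be extracted at $t=3L$, not $t=2L$, since by construction $X$ \emph{is} orthogonal to $\ker A_{2L}$, and orthogonality to $\ker A_{3L}=\spam\{p_+X_3+q_+Y_3\}$ as well would force $(p_-,q_-)=(p_+,q_+)$, which is excluded (a case of $P_{(1,q),(p,1)}$ if both equal $(1,1)$, and otherwise handled by Claim 1). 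Without this second mechanism, your proof does not cover the stated range of manifolds.
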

\begin{proof} Since $H$ is finite,
we have $\fh^{\perp}=\fp\oplus\fm = \fg$. Regarding $\S^3$ as the
unit quaternions, we choose the basis of $\fg$ given by the  left
invariant vector fields $X_i$ and $Y_i$   on $G=\S^3 \times \S^3$
 corresponding to $i,j$ and $k$ in the Lie
 algebras
of the first and second $\S^3$ factor of $G$. Then the action fields
 $X_i^*,Y_i^*$ are Jacobi fields along the
  geodesic $c(t),\; \infty<t<\infty
 $ and are a basis of a self adjoint family
 $V$.

 \smallskip

 We start with three general observations.
 \smallskip

 {\bf Observation 1}.
Non-trivial irreducible representations
       of the identity component $\Ko=\S^1=\{e^{i\gt } \mid \gt \in \R \}$ consist of two dimensional
      representations given by multiplication by $e^{in\gt}$ on $\C$,
      called a weight $n$ representation. If  $\Ko = ( e^{ip\gt}, e^{iq\gt})\subset  \S^3\times\S^3$ has slope
       $(p,q)$ with $\gcd (p,q)=1 $, and $H$ is finite,  the vector space $\fp$  is given by
$\fp=\spam\{p X_1+q Y_1\}$.
 The tangent space $\fm$ to the singular orbit $G/K$ (which is spanned by the action fields $X^*$)
  splits up into $K$ irreducible subspaces
 $W_{\subo}=\spam\{X_1,Y_1\}$,
$W_1=\spam\{X_2,X_3\}$ and $W_2=\spam\{Y_2,Y_3\}$. Notice that
$W_{\subo}$ is one dimensional since $p X_1+q Y_1=0$. Thus we can
also write $W_{\subo}=\spam\{-qX_1+pY_1\}$.  The isotropy action on
$\fm$, which is given by conjugation
       on imaginary quaternions in each component,  is trivial on $W_{\subo}$ and
   has   weight $2p$ on $W_1$ and $2q$ on $W_2$ since e.g. $ e^{ip\gt}je^{-ip\gt}=e^{2ip\gt}j$. If
$p\ne q\ne 0$, all representations in $\fm$ are inequivalent and
hence orthogonal by Schur's Lemma. Furthermore, the metric on $W_i$
is a multiple of the Killing form, again by Schur's Lemma, and since
$X_i,Y_i$ are orthogonal in the Killing form, they are orthogonal in
the metric as well. Thus, unless $(p,q)=(1,1)$, the vector fields $
-q X_1+pY_1, X_2,X_3,Y_2,Y_3$ are orthogonal and $pX_1+qY_1$
vanishes.

 {\bf Observation 2}. In order to determine the derivatives $||X||'(0)$, we will use
equivariance of the second fundamental form $B\colon S^2\fm\to D$
under $\Ko$, where  $D=\R^2$ is the slice. If $H\cap \Ko=\Z_k$, then
the action of $\Ko$ on the slice has $\Z_k$ as
 its
      ineffective kernel since it acts via rotation of a circle
       and if it fixes one point, as does $H$, then it acts trivially on
       $D$. Hence the slice representation has weight $k=|H\cap
       \Ko|$. The vector space
       $S^2\fm$ splits as $S^2W_0\oplus S^2W_1\oplus S^2W_2\oplus
       W_1\otimes W_2\oplus W_0\otimes W_1\oplus W_0\otimes W_2$.
       The action of $\Ko$ on $S^2\fm$ has weight  $0$ on $S^2W_0$, $4p$ on $S^2W_1$, $4q$ on
         $S^2W_2$, and $2p\pm 2q$ on $W_1\otimes W_2$, $2p$ on $W_0\otimes W_1$ and $2q$ on $W_0\otimes W_2$.
         Thus the second fundamental form vanishes on $W_0$,  on
         $W_1$ if
         $4 |p|\ne k$, on $W_2$ if $4|q|\ne k$, on $W_1\otimes W_2$ if $|2p\pm 2q|\ne
         k$ and on
          $W_0\otimes W_1$ if $2|p|$ resp. $2|q|\ne k$.
           This
         will be used to show that in some cases
         $B(X,Y)=\ml X',Y\mr=0$ for $X\in W_i, Y\in W_j$.

\smallskip

    {\bf Observation 3}.     We will also use the  the Weyl group $W\subset N(H)/H$ of the
        cohomogeneity one action (see e.g. \cite{AA}, \cite{Z2}), which is defined as the subgroup of $G$
        which preserves the geodesic $c$.
         One easily sees that there exists a so called Weyl group element $w_-\in W$ in the normalizer of
        $H$ in $\Km=G_{c(0)}$,  unique modulo $H$, which, via the action of $G_{c(0)}$ on
        the slice $D$, satisfies $w_-(c'(0))=-c'(0)$
and hence reverses the geodesic at $t=0$. Similarly, there exists a
$w_+$ in the normalizer of $H$ in
        $\Kp=G_{c(L)}$,  unique modulo $H$, which reverses the geodesic at $t=L$.
           This
         implies that
         conjugation by $w_-$ takes the isotropy group $G_{c(rL)}$ to $G_{c(-rL)}$,
         $r\in\Z$,
         and $w_+$ takes $G_{c(rL)}$ to $G_{c(2L-rL)}$. Furthermore,
         $W$ is the dihedral group generated by $w_-$ and $w_+$. The geodesic $c$ is closed iff the Weyl group is
        finite, in which case the length of $c$ is
        $kL$ where $k$ is the order of $W$.
Finally, since $\Ko$ acts via rotation on the 2-dimensional slice,
the Weyl group element
        $w_-$ can be represented by a rotation by $\pi$ and
         hence can also be characterized as the unique element in
         $\Kmo$ which does not lie in $H$, but
        whose square lies in $H$.

        \bigskip

        We now apply these observations to the manifold $P_{
        (p_-,q_-),(p_+,q_+)}$. The Weyl group elements are given by
        $$w_-=(e^{i\frac{\pi}{4}},e^{i\frac{\pi}{4}})\in \Kmo \mod
        H,\
         \text{ and } w_+=(e^{j\frac{\pi}{4}},e^{j\frac{\pi}{4}})\in \Kpo \mod
        H$$
        since e.g. $w_-^2=(i,i)\in H$, but $w_-\notin H$.
        Notice that conjugation by $e^{i\frac{\pi}{4}} $ interchanges $j$ and $k$ and fixes $i$,
        and
        conjugation by $e^{j\frac{\pi}{4}} $ interchanges $i$ and $k$ and
        fixes $j$. Thus $w_-$ fixes $X_1$ and $Y_1$ but interchanges $X_2$ with $X_3$ and $Y_2$ with $Y_3$.
          One
        easily sees that  $W$, which is generated by
        $w_-$
        and $w_+$, has order $12$ since $(w_-w_+)^6\in H$ but $(w_-w_+)^3\notin H$. Thus
        $c$ has length $12L$. This easily implies that
        \begin{align*}G_{c(0)}&=  (e^{ip_-t},e^{iq_-t})\cdot H\; ,
   \quad G_{c(L)}=
   (e^{jp_+t},e^{jq_+t})\cdot H\; ,
   \quad
   \quad G_{c(2L)}= (e^{kp_-t},e^{kq_-t})\cdot H\\
        G_{c(3L)}&= (e^{ip_+t},e^{iq_+t})\cdot H \; ,
   \quad
        G_{c(4L)}=  (e^{jp_-t},e^{jq_-t})\cdot H
   \; ,
   \quad \ \
    G_{c(5L)}=
    (e^{kp_+t},e^{kq_+t})\cdot H
   \end{align*}
   and $G_{c(rL)}=G_{c((r-6)L)}$ for $r=6,\dots,11$.

\smallskip

    At $t=0$ we have $H\cap \Kmo=\{\pm (1,1), \pm (i,i)\}$ and hence $k=4$. The tangent space to $G/\Km$ is
    the direct sum of  $W_{\subo}=\spam\{X_1,Y_1\}=\spam\{X_1\}=\spam\{Y_1\}$ (since $p_-,q_-\ne
    0$), and
$W_1=\spam\{X_2,X_3\}$ and $W_2=\spam\{Y_2,Y_3\}$. Observation 2
implies that the second fundamental form  vanishes on $S^2(W_1)$ if
$p_-\ne 1$,
  on $S^2(W_2)$ if $q_-\ne 1$, and  on $W_1\otimes W_2$ if $2p_-+2q_-\ne
 \pm 4$, i.e. $p_-+q_-\ne
 \pm 2$. Notice that $p_--q_-=
 \pm 2$ is not possible since $p_-,q_-\equiv 1 \mod 4$ and that $p_-\ne -1$ and $q_-\ne -1$ as well.
          Similarly at $t=rL$, $r\in\Z$ since in all cases
          $k=4$.

          \smallskip

{\bf Claim 1}:  If $p_-\ne 1$ and $ p_+\ne 1$, then $X_3^*$ is a
parallel Jacobi field on $[0,L]$, but is not parallel  on $[0,2L]$.
Similarly, if $q_-\ne 1$ and $ q_+\ne 1$ for $Y_3^*$.

\smallskip

 For this we will show that $X_3^*$
satisfies all properties of \pref{rigidity2} on the
  interval $[t_0,t_1]=[0,L]$. At $t=L$ the  tangent space of $G/\Kp$
  is
    the direct sum of  $\overline{W}_{\subo}=\spam\{- q_+ X_2+p_+ Y_2\}$,
$\overline{W}_1=\spam\{X_1,X_3\}$ and
$\overline{W}_2=\spam\{Y_1,Y_3\}$. Since $X_3\in W_1\cap
\overline{W}_1$, we have
  $X_3(t)\ne 0$ for $t=0,L$, and by  Observation 2, the assumptions imply that
  $||X_3||'_t=0$ at $t=0,L$ as well. Thus condition (a) is
  satisfied. For condition (b), observe that $p_-\ne q_-$ since
  $p_-=q_-$ implies that $(p_-,q_-)=(1,1)$. Thus by Observation 1,  the vectors
   $ -q_- X_1+p_-Y_1, X_2,X_3,Y_2,Y_3$ are orthogonal at $t=0$
   and $p_- X_1+q_-Y_1$ vanishes.
  Similarly, $p_+\ne q_+$ and hence at $t=L$, the vectors $ -q_+ X_2+p_+Y_2, X_1,X_3,Y_1,Y_3$ are orthogonal
   and $p_+ X_2+q_+Y_2$ vanishes. Thus any $Z\in V$ orthogonal to
   $X_3$ at $t=0$ is also orthogonal to $X_3$ at $t=L$.
     Condition (c) holds since there are no interior
   singular points.

   Finally, we come to condition (d).
   Here we use the action of the principal isotropy group $H=
       \Delta Q$ on
         the tangent space of the regular orbits $G/H$. It acts via conjugation and thus $(i,i)$ acts via $\Id$ on
         $\spam\{X_1,Y_1\}$ and as $-\Id$ on
         $\spam\{X_2,X_3,Y_2,Y_3\}$. Similarly for $(j,j)$ and $(k,k)$.
         Hence the representation of $H$ on $\spam\{X_1,Y_1\}$, $\spam\{X_2,Y_2\}$, and $\spam\{X_3,Y_3\}$
         are inequivalent  and thus by Schur's Lemma these subspaces are  orthogonal to each other for
         all $t$. Furthermore, they are invariant under parallel translation since parallel translation
          commutes with isometries and hence  with the
         action of $H$. This implies condition (d) at $t=0$ since $Y=p_-X_1+q_-Y_1$ is
         the only element in $V$ with $Y(0)=0$
         and thus $\ml X_3'(0),Y'(0)\mr=0$.

         Altogether,    \pref{rigidity2} now implies that $X_3^*$ is parallel on $[0,L]$. On the other hand,
         $Z:=p_-X_3+q_-Y_3$ vanishes at $2L$, but $X_3^*(0)$ is not orthogonal to $Z(0)$ since $X_3(0)$ and $Y_3(0)$
          are orthogonal and $p_-\ne 0$. Hence \pref{nonparallel} implies that $X_3^*$ is not parallel on $[0,2L]$.

 \smallskip

{\bf Claim 2}: If $p_-\ne 1, q_-\ne 1$ and $p_-+ q_-\ne\pm 2$ and
$(p_-,q_-)\ne (p_+,q_+)$, then a certain linear combination of
$X^*_3$ and $Y_3^*$ is a parallel Jacobi field
 on $[0,2L]$, but not on $[0,3L]$. Similarly,
 for $p_+,q_+$.

The only Jacobi field that vanishes at $2L$ is $Z=p_-X_3+q_-Y_3$. In
order to satisfy condition (b), we choose $X=aX_3+bY_3$ such that
$\ml X(0),Z(0)\mr=0$. We will show that
 $X^*$ satisfies all properties of
\pref{rigidity2} on the
  interval $[t_0,t_1]=[0,2L]$.  Notice that at $0$ and $2L$ the
  slopes are both $(p_-,q_-)$.

  We start with condition (a). At $t=0$ we have $X\in W_1\oplus W_2$
  and hence $X\ne 0$. The assumptions on the slopes imply that the
  second fundamental form vanishes on $S^2(W_0\oplus W_1\oplus
  W_2)$, i.e. the orbit $G/\Km$ is totally geodesic. This in
  particular
  implies that
  $||X||'(0)=0$. Similarly, $||X||'(2L)=0$
  since the slopes are the same. We also have $X(2L)\ne 0$
  since the only Jacobi field vanishing at $2L$ is $Z$.
  Thus $X(2L)=0$ would contradict the orthogonality assumption at
  $t=0$.

  Condition (b) again follows from Observation 1 since $(p_-,q_-)\ne (1,1)$ implies that $p_-\ne q_-$.
  Hence the vectors
   $ -q_- X_1+p_-Y_1, X_2,X_3,Y_2,Y_3$ are orthogonal at $t=0$
   and $p_- X_1+q_-Y_1$ vanishes,
  and at $t=2L$, the vectors $ -q_- X_3+p_-Y_3, X_1,X_2,Y_1,Y_2$ are orthogonal
   and $p_- X_3+q_-Y_3$ vanishes. Since we have $\ml
   X(2L),Z(2L)\mr=0$, we chose $X$ such that $\ml
   X(0),Z(0)\mr=0$ as well. Notice also that $\ml
   X(2L),-q_- X_3(2L)+p_-Y_3(2L)\mr=0$ is not possible, since then
   $Z(2L)$ would be orthogonal to $X_3(2L)$ or $Y_3(2L)$ or both, but this is
   not possible since $a,b,p_-,q_-$ are all non-zero.

  Condition (c) holds since the only interior singularity is at $t=L$,
  and $p_+X_2+q_+Y_2$ is the only vector that vanishes there. But
  this vector is clearly orthogonal to $X$ at $t=0$.

  For condition (d) we can argue as in Claim 1.

\smallskip

Thus $X^*$ is parallel on $[0,2L]$. Finally, observe that $X(0)$ is
not orthogonal to the kernel  at $t=3L$, which is spanned by
$p_+X_3+q_+Y_3$, unless $\ml aX_3+bY_3, p_+X_3+q_+Y_3\mr_{t=0} =
ap_+||X_3||^2+bq_+||Y_3||^2=0$. Since we also have $\ml
X,p_-X_3+q_-Y_3\mr=0$, this would imply that $(p_-,q_-)=(p_+,q_+)$.
This was excluded, and thus $X^*$ is not parallel on $[0,3L]$.

\bigskip

Now we combine Claim 1 and Claim 2. Claim 1 implies that, up to
possibly switching the two $\S^3$ factors or interchanging $0$ and
$L$, we have the desired Jacobi field, unless the slopes are
$(1,q_-),(p_-,1)$ or $(p_-,q_-),(1,1)$. The first family was
excluded by assumption. In the second family we can assume that
$p_-\ne 1$, $q_-\ne 1$  and $(p_-,q_-)\ne (p_+,q_+)$, since
otherwise we are in the first family. Thus Claim 2 implies that in
the second family we have the desired Jacobi field unless $p_-+
q_-=\pm 2$. Reversing the orientation of the circle, we can assume
$p_-+ q_-= 2$.  This leaves only the candidates with slopes
$(1+2k,1-2k),(1,1)$.
         \end{proof}

\begin{rem*}
The exceptional family $P_{(1,\; q),(p,1)}$  contains several
$G$-invariant analytic metrics with non-negative curvature. Indeed,
$P_{(1,1),(-3,1)}$ is  $\Sph^7$, and $P_{(1,-3),(-3,1)}$ is the
positively curved Berger space (see e.g. \cite{GWZ} or \cite{Z2}).
It also contains $P_{(1,1),(1,1)}$. This manifold is not primitive,
and hence does not admit positive curvature. But  it does admit an
analytic metric with non-negative curvature. Indeed, we claim that
the manifold is $\Sph^3\times \Sph^4$ and that the product metric of
round sphere metrics is invariant. For this we identify the action
of $\S^3\times\S^3$ on $\Sph^3\times \Sph^4$ as
$(r_1,r_2)\in\S^3\times\S^3$ acting as $(p,q)\to (r_1p\;
r_2^{-1},\phi(r_2)q)$ where $\phi(r_2)$ acts via the well known
cohomogeneity one action of $\S^3$ on $\Sph^4$ (effectively an
$\SO(3)$ action) with group diagram $H=\{\pm 1,\pm i,\pm j,\pm
k\}\subset\{ e^{it}\cdot H\;
   ,\;
   e^{jt}\cdot H
   \}\subset\S^3$. One now easily identifies the isotropy groups of
   this action to be those of $P_{(1,1),(1,1)}$.
 \end{rem*}

        We now prove Theorem C in the Introduction.
        \smallskip
         \begin{prop}
Let $M$ be one of the $7$-manifolds $Q_{ (p_-,q_-),(p_+,q_+)}$ with
its cohomogeneity one action by $G=\S^3\times \S^3$. Assume that $M$
is not of type $Q_k=Q_{ (1,1),(k,k+1)},k\ge 0$. Furthermore, let
$c\colon (-\infty,\infty)\to M$ be a geodesic orthogonal to all
orbits. Then for any invariant metric with non-negative curvature
there exists a Jacobi field along $c$, given by the restriction of a
Killing vector field $X^*,\; X\in\fg$, such that $X^*$ is parallel
on some interval but not for all $t$.
 In particular,
the metric is not analytic.
\end{prop}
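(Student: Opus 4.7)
My plan is to adapt \pref{Pfamily} line-by-line to the $Q$-family data. Since $H$ is finite we have $\fh^\perp = \fg$, and $V = \{X^* : X \in \fg\}$ is the relevant self adjoint family along a geodesic $c$ orthogonal to all orbits, spanned by the six Killing fields $X_i^*, Y_i^*$ for $i=1,2,3$. The congruences $p_-, q_-, p_+ \equiv 1 \pmod 4$ and $q_+$ even allow me to take as Weyl group elements $w_- = (e^{ip_-\pi/4}, e^{iq_-\pi/4}) \in \Kmo$ and $w_+ = (e^{jp_+\pi/2}, e^{jq_+\pi/2}) \in \Kpo$, both satisfying $w_\pm^2 \in H$ while $w_\pm \notin H$. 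Conjugation by $w_-$ acts as rotation by $\pi/2$ in the $jk$-plane of each $\mathfrak{su}(2)$ factor, and $w_+$ acts as rotation by $\pi$ in the $ik$-plane of the first factor and trivially on the second (where the hypothesis $q_+$ even is essential). From these I read off the singular isotropy groups $G_{c(rL)}$ along $c$, the order of $W$, and the period of the geodesic.

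The two-claim scheme of \pref{Pfamily} then transfers with small modifications. Observation 1 (splitting $\fm = W_0 \oplus W_1 \oplus W_2$ into mutually inequivalent $\Ko$-representations whenever the slope is not $(\pm 1, \pm 1)$) and Observation 2 (vanishing of the second fundamental form $B$ via weight comparison between the slice $D$ and $S^2\fm$) apply verbatim. With these, I locate a Killing field $X \in \fm$ (analogous to $X_3^*$ in Claim 1 of \pref{Pfamily}) or a linear combination $aX_3^* + bY_3^*$ (analogous to Claim 2) with $\|X^*\|'$ vanishing at the relevant endpoints, and verify conditions (a), (b), (c) of \pref{rigidity2} by the same orthogonality arguments. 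Condition (d) also goes through: although the principal isotropy $H$ in the $Q$-case acts effectively only as $\Z_2$ via $(i,i)$, the two distinct $H$-characters on $\fh^\perp$ still produce an $H$-isotypic orthogonal splitting $\spam\{X_1, Y_1\}^* \oplus \spam\{X_2, X_3, Y_2, Y_3\}^*$ which is parallel along $c$; since $X_3 \in \fm$ lies in the second summand while the unique Jacobi field $Y = p_- X_1 + q_- Y_1$ vanishing at $t = 0$ lies in the first, one gets $\langle X_3'(0), Y'(0) \rangle = 0$ directly. Then \pref{rigidity2} yields parallelism on the appropriate interval $[0,L]$ or $[0,2L]$, and \pref{nonparallel} prevents global parallelism by exhibiting a later singular point where a Jacobi field in $V$ vanishes non-orthogonally to $X(0)$.

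The main work, and where I expect the most delicate bookkeeping, is the case analysis on the slope pairs. In the $P$-family the residual exceptional family was $P_{(1,q),(p,1)}$; here, the asymmetry of the $Q$-family (both $p_-, q_- \equiv 1 \pmod 4$ on the minus side but $q_+$ even on the plus side) changes the weights appearing in $S^2 W_1, S^2 W_2, W_1 \otimes W_2, W_0 \otimes W_j$ and hence the vanishing pattern of $B$. Running the analog of Claim 1 on each side, and falling back on the Claim 2-type combination $aX_3^* + bY_3^*$ when Claim 1 fails, the calculation should show that the only configuration for which neither strategy produces the required Killing field is $(p_-, q_-) = (1, 1)$ together with $(p_+, q_+) = (k, k+1)$, i.e.\ precisely $M = Q_k$. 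This matches the excluded family in the hypothesis and completes the proof.
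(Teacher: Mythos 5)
Your framework and tools match the paper's: the Weyl group elements $w_- = (e^{i\pi/4}, e^{i\pi/4})$ and $w_+ = (j, \pm 1)$ (giving $|W| = 8$ and geodesic period $8L$), the appeal to Observations~1--2 together with \pref{rigidity2} and \pref{nonparallel}, and the resolution of condition (d) through the parallel $H$-isotypic splitting $\spam\{X_1, Y_1\}^* \perp \spam\{X_2, X_3, Y_2, Y_3\}^*$ are all exactly what the paper uses.

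However, the case analysis --- which is the entire content of the proposition --- is deferred to ``the calculation should show,'' and the structural fact that makes the calculation close is never identified. The decisive asymmetry is not the congruence conditions on $p_\pm, q_\pm$ themselves but their effect on the \emph{slice weight}: $H \cap \Kmo = \{\pm(1,1), \pm(i,i)\}$ has order $4$, so $k = 4$ at every minus-type orbit, whereas $H \cap \Kpo = \{(\pm 1, 1)\}$ has order $2$, so $k = 2$ at every plus-type orbit. Since $4|p_+|$ can never equal $2$, Observation 2 forces $\|X_3\|'(L) = 0$ \emph{automatically}, with no hypothesis on $(p_+, q_+)$; thus the Claim-1 argument on $[0, L]$ requires only $p_- \ne 1$, and one reduces at once to $(p_-, q_-) = (1,1)$. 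The Claim-2 argument must then be run on the interval $[L, 3L]$ --- between two plus-type endpoints, not $[0, 2L]$ as the $P$-family analogy suggests --- where $k = 2$ together with the extra hypothesis $p_+ \pm q_+ \ne \pm 1$ makes $G/K^+$ totally geodesic, and \pref{nonparallel} is invoked at $t = 7L$. Your plan of ``running Claim 1 on each side'' does not surface this: every interval between consecutive singular orbits has one minus-type endpoint, so a Claim-1 argument always imports a minus-side slope condition and cannot by itself constrain $(p_+, q_+)$; only the Claim-2 run on $[L,3L]$ can. Without the $k = 2$ observation and the choice of that interval, the asserted reduction to the residual family $Q_{(1,1),(k,k+1)}$ is unjustified.
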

\begin{proof}
We indicate the changes that are necessary. The first difference is
the Weyl group since the Weyl group elements are now
$$w_-=(e^{i\frac{\pi}{4}},e^{i\frac{\pi}{4}})\in \Kmo \mod
        H,\
         \text{ and } w_+=(j,\pm 1)\in \Kpo \mod
        H$$
        and hence $|W|=8$, i.e. the closed geodesic has length $8L$.
        The isotropy groups are given by
        \begin{align*}G_{c(0)}&=  (e^{ip_-t},e^{iq_-t})\cdot H\; ,
   \quad G_{c(L)}=
   (e^{jp_+t},e^{jq_+t})\cdot H\; ,
   \quad
   \quad G_{c(2L)}= (e^{-ip_-t},e^{iq_-t})\cdot H\\
        G_{c(3L)}&= (e^{-kp_+t},e^{kq_+t})\cdot H \; ,
   \quad
        G_{c(4L)}=  (e^{ip_-t},e^{iq_-t})\cdot H
   \; ,
   \quad \ \
    G_{c(5L)}=
    (e^{-jp_+t},e^{jq_+t})\cdot H\\
    G_{c(6L)}&= (e^{-ip_-t},e^{iq_-t})\cdot H \; ,
   \quad
        G_{c(7L)}=  (e^{kp_+t},e^{kq_+t})\cdot H
   \; ,
   \quad \ \
    G_{c(8L)}=
    (e^{ip_-t},e^{iq_-t})\cdot H
   \end{align*}

   \smallskip

   A second difference is the normal weights. At $t=0$ we still have
   $H\cap \Kmo=\{\pm (1,1), \pm (i,i)\}$ and hence $k=4$. But at
   $t=L$ we have $H\cap \Kpo= \{(\pm 1,  1)\} $ and hence $k=2$.
   Similarly, $k=4$ at $t=2L,4L$ and $k=2$ at $t=3L,5L$. In
   particular, Observation 2 implies that $||X_3||'=||Y_3||'=0$ at
   $t=L$ and $t=3L$.

   We first claim that $(p_-,q_-)=(1,1)$. Indeed, if e.g. $p_-\ne 1$,
   then we can apply \pref{rigidity2} to $X_3$ on the
  interval $[t_0,t_1]=[0,L]$ as in the proof of Claim 1 in
  \pref{Pfamily}, since $k=2$ at $L$. For condition (b) notice that $p_+\ne q_+$ since $p_+$ is odd, and
  $q_+$ even. Furthermore, notice that  if $q_+=0$, the vectors
  $Y_1,Y_3,-q_+X_2+p_+Y_2$ do not need to be orthogonal to each
  other since $\Kpo$ acts trivially on $\overline{W}_{\subo}\oplus
  \overline{W}_2$, but they are orthogonal to $X_3\in\overline{W}_1$
  which is sufficient for condition (b).

  For condition (d) we again use the action of the principal isotropy group $H=
         \{(\pm
1, \pm 1) , (\pm i , \pm i)\}$ on
         the tangent space of the regular orbits $G/H$. $H$ acts via $\Id$ on
         $\spam\{X_1,Y_1\}$ and as $-\Id$ on
         $\spam\{X_2,X_3,Y_2,Y_3\}$. Thus by Schur's Lemma these two subspaces are orthogonal for
         all $t$ and are also invariant under parallel translation.
          This implies condition (d) since $Y=p_-X_1+q_-Y_1$ is the only element in $V$ with $Y(0)=0$
         and thus $\ml X_3',Y'\mr_{t=0}=0$. Finally, notice that
         $Z=-p_+X_3+q_+Y_3$ satisfies $Z(3L)=0$, but $\ml
         X_3(0),Z(0)\mr=p_+||X_3(0)||^2\ne 0$ and hence by
         \pref{nonparallel} $X_3^*$ is not parallel on $[0,3L]$.

\smallskip

         Next, we claim that if $p_+\pm q_+\ne\pm 1$, then we can
         argue as in the proof of Claim 2 in \pref{Pfamily}. Indeed,
         we choose $X=aX_3+bY_3$ so that $\ml X,-p_+X_3+q_+Y_3\mr=0$ at
         $t=L$ and apply  \pref{rigidity2} to $X^*$ on the interval
         $[L,3L]$. At the endpoints, the second fundamental form
         vanishes on $S^2W_i$ and $W_0\otimes W_i$ since $k=2$, and
         on $W_1\otimes W_2$ since $p_+\pm q_+\ne\pm 1$. Thus the
         singular orbits at $t=L$ and $t=3L$ are totally geodesic,
         which implies $||X^*||'=0$ at $t=L,3L$.
         The orthogonality condition on $X$ again implies condition
         (b), and for (c) we use the action of $H$ to conclude that
         $-p_-X_1+q_-Y_1$,
         the only vanishing Jacobi field at $t=2L$, is orthogonal to
         $X$ at $t=L$. For condition (d) we argue as in the previous case. Finally, notice that
         $Z=p_+X_3+q_+Y_3$ satisfies $Z(7L)=0$, but $\ml
         X(L),Z(L)\mr\ne 0$ since otherwise
         $ap_+||X_3(L)||+bq_+||Y_3(L)||^2=0$, which contradicts
          $\ml
          X(L),-p_+X_3+q_+Y_3\mr=-ap_+||X_3||^2+bq_+||Y_3||^2=0$
          since $p_+\ne 0$ and $a\ne 0$.
           Thus $X_3^*$ is not parallel on $[L,7L]$.

\smallskip

         Altogether, we can now assume that $(p_-,q_-)=(1,1)$ and
         $p_++ q_+=\pm 1$ or $p_+- q_+=\pm 1$. We can changes the
         sign of $p_+$ by conjugating all groups with $(1,j)$
         and both signs by reversing the orientation of the circle.
         Thus it is sufficient to assume $q_+-p_+=1$. But this is
         precisely the family $Q_k$ with slopes $(1,1),(k,k+1)$,
         $k\ge 0$,
         after possibly switching the two $\S^3$ factors.
\end{proof}
\smallskip

\begin{rem*}
$Q_1$ is the positively curved Aloff Wallach space which admits an
invariant analytic metric with positive curvature. It is not known
if $Q_k$ with $k>1$ admit such metrics, not even if they admit
analytic metrics with non-negative curvature.

The manifold $Q_0$ is special. In the language of our paper, any
linear combination of $Y_2$ and $Y_3$ is orthogonal to all kernels,
and hence a parallel Jacobi field for all $t$. But there is no
Jacobi field which is necessarily parallel for some $t$ but not for
all $t$. In \cite{GWZ} it was shown that $Q_0$ has the cohomology of
$\Sph^2\times\Sph^5$, but we do not know if it is diffeomorphic to
it. Furthermore, in \cite{GZ3} it was shown that it is also the
total space of the $\SO(3)$ principle bundle over $\CP^2$ with
$w_2\ne 0$ and $p_1=1$.
 \end{rem*}
\bigskip

We finally come to the proof of Theorem D. Here we consider the
cohomogeneity one manifolds with group diagram
$$
 H=  \{e\} \subset\{ \Delta \S^3
   ,\;
   (e^{ipt},e^{iqt})
   \}\subset\S^3\times\S^3,
  $$
  where $\Delta\S^3$ is embedded diagonally and $p,q$ are arbitrary
  relatively prime integers. Here we have $w_-=(-1,-1)$ and $w_+$ is
  one of $(\pm 1, \pm 1)$ and thus the normal geodesic has length
  $4L$. This implies that $G_{c(2L)}=G_{c(0)}$ and $G_{c(3L)}=G_{c(L)}$.  Here it is convenient to choose the base
  point $t_0$ to be regular in which case the Lagrange tensor
  satisfies $A_{t_0}=\Id$ and thus $X=A_tv$ with $v=X(t_0)$. $A_t$
  has two kernels, at $t=0$ and at $t=L$ (which agree with the kernels
  at $2L$ and $3L$ resp): $\ker A_0=\spam\{X_1+Y_1,X_2+Y_2,X_3+Y_3\}$ and $\ker A_L=\spam\{pX_1+qY_1\}$, all evaluated at $t_0$.
   If $(p,q)=(1,1)$, clearly $\ker A_L\subset\ker A_0$. There exists a 2-dimensional subspace $W\subset E_{t_0}$ (3-dimensional
   if $(p,q)=(1,1)$) which is orthogonal to both kernels. Thus $g_W$
   is concave for all $t$, and hence constant. By Theorem
   B, this implies that the Jacobi fields $X\in V$ with $X(t_0)\in
   W$ are parallel, and hence $R$ vanishes on this subspace. In
   particular, $R$ cannot be 2-positive. This finishes the proof of
   Theorem D

 \providecommand{\bysame}{\leavevmode\hbox
to3em{\hrulefill}\thinspace}

\end{document}